\theoremstyle{theorem}
\newtheorem{theorem}{Theorem}
\theoremstyle{corollary}
\newtheorem{corollary}{Corollary}
\theoremstyle{lemma}
\newtheorem{lemma}{Lemma}
\theoremstyle{definition}
\newtheorem{definition}{Definition}
\theoremstyle{proposition}
\newtheorem{proposition}{Proposition}
\theoremstyle{proof}
\newtheorem{example}{Example}
\theoremstyle{remark}
\newcommand{\bel}[1]{\begin{equation}\label{#1}}
\newcommand{\be}{\begin{equation}}
\newcommand{\ba}{\begin{eqnarray}}
\newcommand{\ea}{\end{eqnarray}}
\newcommand{\qe}{\end{equation}}
\newcommand{\bd}{\begin{definition}}
\newcommand{\ed}{\end{definition}}
\newcommand{\beg}{\begin{example}}
\newcommand{\eeg}{\end{example}}
\newcommand{\tuple}[2]{{#1}_1,{#1}_2,\dots,{#1}_{#2}}
\newcommand{\tensum}[2] { \sum_{i_{2}, i_{3}, \dots,  i_{m}=1}^{n} #1_{i i_{2}i_{3}\dots i_{m}} #2_{i_{2}}x_{i_{3}} \dots x_{i_{m}}.}
\begin{document}
\title{On the spectrum of directed uniform and non-uniform hypergraphs}

\author[]{\rm Anirban Banerjee}
\author[]{\rm Arnab Char}

\affil[]{Department of Mathematics and Statistics}
\affil[ ]{Indian Institute of Science Education and Research Kolkata}
\affil[ ]{Mohanpur-741246,  India}
\affil[ ]{\textit {{\scriptsize anirban.banerjee@iiserkol.ac.in, arnabchar@gmail.com}}}

\maketitle 
\begin{abstract}
Here, we suggest a method to represent general directed uniform and non-uniform  hypergraphs by different connectivity tensors.  We  show many results on spectral properties of undirected hypergraphs  also hold for general directed uniform  hypergraphs.
Our representation of  a connectivity tensor will be very useful for the further development in spectral theory of directed hypergraphs. At the end, we have also introduced the concept of weak* irreducible hypermatrix to better explain connectivity of a directed hypergraph.
\end{abstract}

\noindent
\textbf{AMS classification}: 05C20, 05C65, 15A69, 15A18 \\
\textbf{Keywords}: Directed hypergraph, Spectral theory of directed hypergraphs, Adjacency hypermatrix,  Laplacian hypermatrix, Signless Laplacian hypermatrix

\section{Introduction}
In 2005, Qi introduced 
the concept of eigenvalues of a real supersymmetric tensor  \cite{Qi2005}. It stimulated many researchers to work rigorously  on the different spectral properties of tensors \cite{Chang2009, Li2013, Qi2013, Shao2013}. Perrone-Frobenius theorem is also introduced for tensors \cite{Chang2008}. 
In 2012, Cooper and Dutle \cite{Cooper2012} 
defined adjacency tensor for uniform hypergraphs and studied its eigenvalues. Afterwards, many work has been started on  spectral properties of different tensors (or hypermatrices) which represent hypergraphs \cite{Hu2013_2, Hu2014, Hu2015, Pearson2015, Qi2014, Qi2014_2}. 
The most of the studies were confined to uniform hypergraphs. In 2016, Banerjee and others represented non-uniform hypergraphs by tensors and studied their spectrum  \cite{Banerjee2017}.
We also refer \cite{Qi2017}, which has accumulated many information on spectral analysis of hypergraphs using different tensors, for more reading on the recent developments in this area. 
In 2016, Xie and Qi represented directed uniform hypergraphs by different connectivity tensors and explored properties of their various eigenvalues \cite{Xie2016}. This spectral study only focuses on the very specific kind of directed uniform hypergraphs, where only one vertex is always in the tail of an directed (hyper) edge.


Here, we show a mathematical framework to represent a general directed uniform hypergraph by different connectivity tensors. The tails (or heads) of (directed) edges in a  general directed ($m$-)uniform hypergraph may contain different (non zero) number of vertices, but, the total number of vertices in the tail and head of an edge is constant ($m$). We have studied different spectral properties of  adjacency tensors, Laplacian tensors and signless  Laplacian tensors of general directed uniform hypergraphs. This spectral study has also been extended for general directed non-uniform hypergraphs, where the total number of vertices in the tail and head of a directed  edge is not constant.

\section{Preliminary}
An  $m$ order $n$  dimensional real hypermatrix is a multidimensional array and is defined as, $$A=(a_{i_{1}{i_{2}}\dots{i_{m}}}),   a_{i_{1}{i_{2}}\dots{i_{m}}} \in \mathbb{R} \text{ and } 1\leq i_{1}, i_{2}, \dots, i_{m} \leq n.$$
Now onwards, we use hypermatrix and tensor interchangeably. Let us denote the set of all $m$-order $n$-dimensional hypermatrices (tensors) by $M_{m,n}$.\\
 A real hypermatrix $A=(a_{i_{1}{i_{2}}\dots{i_{m}}})\in M_{m,n}$ is called supersymmetric if its entries, $a_{i_{1}{i_{2}}\dots{i_{m}}}$'s, are invariant under any permutation of the indices. A tensor is called non-negative if all of its entries are non-negative real numbers.\\
 Now we recall the hypermatrix (tensor) product defined in \cite{Shao2013}. Let $A\in M_{m,n}$ and $B\in M_{k,n}$. The product of $A$ and $B$ is another hypermatrix $C=(c_{i\alpha_1\alpha_2\dots \alpha_{m-1}})=AB\in M_{(m-1)(k-1)+1,n}$ and it is defined as,
$$c_{i\alpha_1\alpha_2\dots \alpha_{m-1}}=\sum_{i_2\dots i_m=1}^n a_{ii_2\dots i_m}b_{i_2\alpha_1}\dots b_{i_m\alpha_{m-1}},$$ $\text{  where } i\in\{1,2,\dots ,n\} \text { and } \alpha_1,\alpha_2,\dots ,\alpha_{m-1}\in \{1,2,\dots ,n\}^{k-1}, \text{ that is, } k-1 \text{ times cartesian product of } \{1,2,\dots ,n\} $\\
By the above definition, if $x=(x_1,x_2,\dots,x_n)\in M_{1,n}$ and $A\in M_{m,n}$ then,
$Ax$  becomes an $n$-dimensional vector, whose $i$-th entry is $$(Ax)_i=\tensum{A}{x}$$
We also denote $Ax$ by $Ax^{m-1}.$ For $A\in M_{m,n}$ and $x\in M_{1,n}$, we define $$Ax^m:=\sum_{i_{1}, i_{2}, \dots,  i_{m}=1}^{n} a_{i_1 i_{2}\dots i_{m}} x_{i_{1}}x_{i_{2}} \dots x_{i_{m}},$$ which is a homogeneous polynomial of $x_1,x_2,\dots ,x_m$. For any $x\in M_{1,n}$ we also define a vector $x^{[m-1]}$ whose $i$-th entry is given by, $$(x^{[m-1]})_{i}=x_i^{m-1}.$$

\begin{definition}
  Let $A\in M_{m,n}$ be a nonzero hypermatrix. A pair $(\lambda,  x) \in \mathbb{C} \times (\mathbb{C}^{n}\setminus \{0\})$  is called eigenvalue and  eigenvector (or simply an eigenpair) if they satisfy the following equation,
  $$Ax^{m-1}=\lambda x^{[m-1]},$$
  that is, an eigenpair $(\lambda,x)$ satisfies the following $n$ equations,
  $$(Ax)_i=\lambda x_i^{m-1},\text{ } i=1,2,\dots ,n.$$
  For any $x=(x_i)\in M_{1,n}$, we denote $x\in \mathbb{R}_{++}^{n}$ or $x\in \mathbb{R}_{+}^{n}$ if  all the entries, $x_i$'s are positive or non-negative respectively. We denote $$\rho(A)=\{|\lambda|:\lambda \text{ is an eigenvalue of } A\}.$$
   $(\lambda,  x)$, is called an $H$-eigenpair (i.e.,  $\lambda$ and $x$ are called $H$-eigenvalue and   $H$-eigenvector,  respectively) if they both are real. An $H$-eigenvalue $\lambda$ is called $H^+  (H^{++})$-eigenvalue  if the corresponding eigenvector 
   $x \in \mathbb{R}_+^n $
   $(\mathbb{R}_{++}^n)$.\\
\end{definition}

 \begin{definition}
  Let $A$ be a nonzero hypermatrix. A pair $(\lambda,  x) \in \mathbb{C} \times (\mathbb{C}^{n}\setminus \{0\})$  is called an $E$-eigenpair (where $\lambda$ and $x$ are called $E$-eigenvalue and  $E$-eigenvector,  respectively)
  if they satisfy the following equations,
  $$Ax^{m-1}= \lambda x,$$
  $$ \sum_{i=1}^{n} x_{i}^2 =1.$$
   $(\lambda^Z,  x)\in (\mathbb{R}\times (\mathbb{R}^n\setminus \{0\})$ is  called a $Z$-eigenpair if they satisfy the above equations.\\
 \end{definition}

\begin{definition}
 $A\in M_{m,n}$ is called copositive if for any $x\in {\mathbb{R}^n}_{+}$, $Ax^m\geq 0$.\\
\ed
By \cite{Qi2005}, for any non-symmetric tensor $A\in M_{m,n}$, we have a super-symmetric tensor $\overline{A}\in M_{m,n}$ such that $Ax^m=\overline{A}x^m$. Clearly $A$ is copositive if $\overline{A}$ is copositive. Now we have the following lemma.\\
\begin{lemma}{\label{cop}}
If $A\in M_{m,n}$ is copositive and it has an $H^+$ eigenvalue $\lambda$, then $\lambda\geq 0$.\\ 
\end{lemma}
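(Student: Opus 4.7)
The plan is to exploit the identity that relates $Ax^m$ to $\lambda$ via the eigenpair equation, and then to use the copositivity of $A$ together with positivity of the relevant sum of nonnegative entries.

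First I would take an $H^+$-eigenpair $(\lambda, x)$ with $x \in \mathbb{R}_+^n \setminus \{0\}$ satisfying $Ax^{m-1} = \lambda x^{[m-1]}$. Multiplying the $i$-th scalar equation $(Ax^{m-1})_i = \lambda x_i^{m-1}$ by $x_i$ and summing over $i$, I obtain on the left-hand side
\[
\sum_{i=1}^n x_i \, (Ax^{m-1})_i \;=\; \sum_{i_1,i_2,\dots,i_m=1}^{n} a_{i_1 i_2 \dots i_m} x_{i_1} x_{i_2} \cdots x_{i_m} \;=\; Ax^m,
\]
and on the right-hand side
\[
\sum_{i=1}^n \lambda \, x_i \cdot x_i^{m-1} \;=\; \lambda \sum_{i=1}^n x_i^m.
\]
Hence $Ax^m = \lambda \sum_{i=1}^n x_i^m$.

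Now I would invoke copositivity: since $x \in \mathbb{R}_+^n$, we have $Ax^m \geq 0$. On the other hand, $x \neq 0$ and each $x_i \geq 0$ gives $\sum_{i=1}^n x_i^m > 0$ (here we use $x_i \geq 0$, so $x_i^m \geq 0$, and at least one $x_i > 0$ makes the sum strictly positive). Dividing the identity $\lambda \sum_i x_i^m = Ax^m \geq 0$ by the positive quantity $\sum_i x_i^m$ yields $\lambda \geq 0$, as desired.

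There is no real obstacle here; the only subtlety is making sure the contraction argument on both sides of the eigen-equation matches the definition of $Ax^m$ given earlier in the preliminaries, and that $\sum_i x_i^m$ is strictly positive (which fails only if $x = 0$, a case excluded by the definition of an eigenvector).
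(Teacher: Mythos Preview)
Your proof is correct and follows the same approach as the paper: contract the eigen-equation with $x$ to obtain $Ax^m = \lambda \sum_i x_i^m = \lambda\,\|x\|_m^m$, then use copositivity and strict positivity of $\|x\|_m^m$ to conclude $\lambda \geq 0$. The paper's version is terser, but the argument is identical.
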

\begin{proof}
Suppose, $\lambda$ is an eigenvalue and it's corresponding eigenvector is $x$ then,
$Ax^{m-1}=\lambda x^{[m-1]}$. This imply $\lambda=\dfrac{Ax^m}{{\abs{\abs{x}}_m}^m}$. Since $A$ is a copositive tensor, we have, $\lambda\geq 0$.\\
\end{proof}
\begin{theorem}[Lemma 3.1, \cite{Li2013}]\label{maxz}
Let $A\in M_{m,n}$ be a real symmetric tensor where $m$ is even. Then
$$\lambda_{max}^Z(A)= max _{||x||=1} x^{t}Ax^{m-1}=max _{||x||=1}Ax^m.$$\\
\end{theorem}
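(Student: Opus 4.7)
The second equality, $x^{t}Ax^{m-1}=Ax^m$, I would dispatch first by a direct unpacking of the definitions:
$x^{t}Ax^{m-1}=\sum_{i} x_{i}(Ax^{m-1})_{i}=\sum_{i,i_{2},\dots,i_{m}} a_{i i_{2}\dots i_{m}} x_{i} x_{i_{2}}\cdots x_{i_{m}}=Ax^m$, which holds for any (not necessarily symmetric) $A$ and does not use the parity of $m$.

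For the first equality, my plan is the standard Lagrange multiplier argument on the sphere. The function $f(x)=Ax^m$ is continuous and the unit sphere $S=\{x\in\mathbb{R}^n:\|x\|=1\}$ is compact, so $f$ attains its maximum on $S$ at some real $x^{*}$. Since $A$ is supersymmetric, $\nabla(Ax^m)=m\,Ax^{m-1}$, and $\nabla(\|x\|^2-1)=2x$, so at the maximizer there is a real multiplier $\mu$ with $m\,A(x^{*})^{m-1}=2\mu x^{*}$, i.e.\ $A(x^{*})^{m-1}=(2\mu/m)\,x^{*}$. Contracting with $x^{*}$ and using the first paragraph gives $\max_{S}Ax^m=f(x^{*})=(2\mu/m)\|x^{*}\|^2=2\mu/m$, so $\bigl(2\mu/m,\,x^{*}\bigr)$ is a $Z$-eigenpair whose eigenvalue is precisely $\max_{S}Ax^m$. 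This yields $\max_{\|x\|=1}Ax^m\le \lambda_{max}^{Z}(A)$.

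For the reverse inequality, let $(\nu,y)$ be any $Z$-eigenpair with $\|y\|=1$; then $Ay^{m-1}=\nu y$, and contracting with $y$ gives $Ay^m=\nu$, so every $Z$-eigenvalue is a value of $f$ on $S$ and hence $\lambda_{max}^{Z}(A)\le\max_{\|x\|=1}Ax^m$. Combining the two inequalities gives the theorem.

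The main conceptual point I expect to be worth flagging is the role of $m$ being even: it guarantees that $f(x)=Ax^m$ is invariant under $x\mapsto -x$ in sign (so the maximizer over $S$ really is the largest $Z$-eigenvalue rather than some signed variant) and, more importantly, ensures that the supremum of $Ax^m$ over $\mathbb{R}^n_{+}$-style cones is attained on the unit sphere in a well-behaved way. The only genuine analytic step is compactness plus Lagrange multipliers; everything else is polynomial bookkeeping, so I do not anticipate a real obstacle beyond verifying that the multiplier produced by Lagrange is real (which follows from $x^{*}$ and $A$ being real).
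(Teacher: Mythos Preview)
The paper does not actually supply a proof of this statement: it is quoted verbatim as Lemma~3.1 of \cite{Li2013} and then used as a black box. So there is no ``paper's own proof'' to compare against.

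Your argument is correct and is exactly the standard one (and is essentially what appears in \cite{Li2013}): compactness of the unit sphere gives a maximizer of the real polynomial $Ax^{m}$, supersymmetry gives $\nabla(Ax^{m})=m\,Ax^{m-1}$, Lagrange multipliers then produce a real $Z$-eigenpair whose eigenvalue equals $\max_{\|x\|=1}Ax^{m}$, and the reverse inequality is the trivial observation that contracting any $Z$-eigenpair against its eigenvector recovers the eigenvalue as a value of $Ax^{m}$ on the sphere. The identity $x^{t}Ax^{m-1}=Ax^{m}$ is, as you say, pure notation.

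One small remark on your closing paragraph: the Lagrange-multiplier proof you wrote down does not actually use the parity of $m$ anywhere, and indeed the identity $\lambda^{Z}_{\max}(A)=\max_{\|x\|=1}Ax^{m}$ holds for symmetric $A$ of any order $m\ge 2$. Your attempted justification (sign-invariance of $f$ under $x\mapsto -x$, behaviour over cones) is not what drives the argument; the hypothesis ``$m$ even'' in the cited lemma is incidental to this particular statement and is there because \cite{Li2013} works throughout in the even-order setting. This does not affect the validity of your proof.
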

Let $A\in M_{m,n}$ and $r_i(A)=\sum\limits_{i_{2}, i_{3}, \dots,  i_{m}=1}^{n} a_{ii_{2}i_{3}\dots i_{m}}$, then we have the following theorem.\\
\begin{lemma}[Lemma 4, \cite{Yuan2015}]\label{bound}
Let $A\in M_{m,n}$ be a non-negative tensor. Then, $$\min_{i}\{r_i(A)\}\leq \rho(A)\leq \max_i\{r_i(A)\}$$
\end{lemma}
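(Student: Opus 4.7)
The inequality is the tensor analogue of the classical Perron--Frobenius bound for non-negative matrices, so the plan is to mimic the matrix argument using a non-negative eigenvector supplied by the Perron--Frobenius theorem for non-negative tensors (Chang--Pearson--Zhang). Specifically, one knows that for any non-negative $A \in M_{m,n}$ there exists a non-negative $H$-eigenvector $x \in \mathbb{R}^n_+ \setminus \{0\}$ whose associated eigenvalue equals $\rho(A)$. I would take this as the starting point.

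For the upper bound, the plan is to pick an index $k$ with $x_k = \max_i x_i > 0$ and read off the $k$-th coordinate of the eigenvalue equation:
\[
\rho(A)\, x_k^{m-1} \;=\; (Ax^{m-1})_k \;=\; \sum_{i_2,\dots,i_m=1}^{n} a_{k i_2\cdots i_m}\, x_{i_2}\cdots x_{i_m} \;\leq\; x_k^{m-1}\, r_k(A) \;\leq\; x_k^{m-1}\, \max_i r_i(A),
\]
where the first inequality uses non-negativity of $A$ and the maximality of $x_k$. Dividing by $x_k^{m-1}$ yields $\rho(A) \leq \max_i r_i(A)$.

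The lower bound is the more delicate step, because the Perron vector $x$ may have zero entries and a naive "minimum coordinate" argument can degenerate at indices where $x_i = 0$. To handle this, I would pass to a perturbation $A_\varepsilon := A + \varepsilon J$ where $J$ is the all-ones tensor and $\varepsilon > 0$. Then $A_\varepsilon$ is strictly positive, so Perron--Frobenius guarantees a strictly positive $H$-eigenvector $x(\varepsilon) \in \mathbb{R}^n_{++}$ with eigenvalue $\rho(A_\varepsilon)$. Now the minimum-coordinate trick works without obstruction: picking $j$ with $x_j(\varepsilon) = \min_i x_i(\varepsilon) > 0$, reading off the $j$-th equation and using non-negativity gives
\[
\rho(A_\varepsilon)\, x_j(\varepsilon)^{m-1} \;=\; (A_\varepsilon\, x(\varepsilon)^{m-1})_j \;\geq\; x_j(\varepsilon)^{m-1}\, r_j(A_\varepsilon) \;\geq\; x_j(\varepsilon)^{m-1}\, \min_i r_i(A_\varepsilon),
\]
so $\rho(A_\varepsilon) \geq \min_i r_i(A_\varepsilon)$.

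The final step is to let $\varepsilon \to 0^{+}$. Since the entries of $A_\varepsilon$ depend continuously on $\varepsilon$, the row sums $r_i(A_\varepsilon) = r_i(A) + \varepsilon\, n^{m-1}$ converge to $r_i(A)$, and the spectral radius $\rho(A_\varepsilon)$ converges to $\rho(A)$ by continuity of $\rho$ on non-negative tensors. This delivers $\rho(A) \geq \min_i r_i(A)$ and completes the proof. The main obstacle I anticipate is the lower bound in the non-irreducible case, which is precisely what the perturbation step is designed to bypass; everything else is a direct transcription of the matrix argument into the tensor language already set up in Section~2.
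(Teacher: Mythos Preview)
The paper does not supply its own proof of this lemma: it is quoted verbatim as Lemma~4 of \cite{Yuan2015} and used as a black box, so there is no argument in the paper to compare against. Your proposed proof is the standard one and is correct; the only point worth flagging is that the upper bound does not actually require the Perron eigenvector---it holds for \emph{any} eigenpair $(\lambda,x)$ by taking $k$ with $|x_k|=\max_i|x_i|$ and bounding $|\lambda|\,|x_k|^{m-1}\le r_k(A)\,|x_k|^{m-1}$, which is in fact what one needs since $\rho(A)$ is a supremum over all eigenvalues. Your perturbation argument for the lower bound is exactly the right way to circumvent possible zero coordinates in the non-irreducible case, and the continuity of $\rho$ in the entries (a consequence of the resultant/characteristic-polynomial description of the spectrum, or of the monotonicity $\rho(A)\le\rho(A_\varepsilon)$ together with the upper bound applied to $A_\varepsilon$) closes the limit.
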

The Theorem 6(a)in \cite{Qi2005} can also be stated for any non-symmetric tensor $A\in M_{m,n}$ as  follows. 
\begin{theorem}\label{disk}
All the eigenvalues of $A$ will lie in the union of $n$ disks in $\mathbb{C}$. These $n$ disks have the diagonal elements of $A$ as their centres, and their corresponding radii are the absolute value of the sum of the corresponding off diagonal elements.\\  
\end{theorem}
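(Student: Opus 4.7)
The plan is to adapt the classical Geršgorin disk argument row-by-row to the $n$ scalar equations contained in the eigenvalue identity $Ax^{m-1}=\lambda x^{[m-1]}$. Fix any eigenpair $(\lambda,x)$ with $x\neq 0$, and recall that coordinatewise this reads
$$\sum_{i_{2},\dots,i_{m}=1}^{n} a_{i i_{2}\dots i_{m}}\, x_{i_{2}}\cdots x_{i_{m}} \;=\; \lambda\, x_{i}^{m-1}, \qquad i=1,\dots,n.$$
Select an index $k$ at which $|x_{k}|=\max_{i}|x_{i}|$; since $x\neq 0$ this maximum is strictly positive. The goal is to show $|\lambda-a_{kk\cdots k}|\le R_{k}$, where $R_{k}$ is the sum of the absolute values of the off-diagonal entries in the $k$-th ``slice'' of $A$.

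Next, I would isolate the purely diagonal term $a_{kk\cdots k}\,x_{k}^{m-1}$ from the $k$-th scalar equation and rewrite it as
$$(\lambda-a_{kk\cdots k})\,x_{k}^{m-1} \;=\; \sum_{(i_{2},\dots,i_{m})\neq (k,\dots,k)} a_{k i_{2}\dots i_{m}}\, x_{i_{2}}\cdots x_{i_{m}}.$$
Taking absolute values and using $|x_{i_{j}}|\le|x_{k}|$ for each $j$, the right-hand side is bounded by $|x_{k}|^{m-1}\sum_{(i_{2},\dots,i_{m})\neq(k,\dots,k)}|a_{k i_{2}\dots i_{m}}|$. Dividing through by the positive quantity $|x_{k}|^{m-1}$ gives $|\lambda-a_{kk\cdots k}|\le R_{k}$, placing $\lambda$ inside the $k$-th disk. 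Since $k$ depends on $x$, every eigenvalue lies in \emph{some} such disk, hence in the union of all $n$ of them.

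The only point that needs emphasis, and the reason the statement extends verbatim from Theorem~6(a) of \cite{Qi2005} to the non-symmetric setting, is that the above derivation never permutes the indices $i_{2},\dots,i_{m}$: it uses only the $k$-th coordinate of the vector equation, which is well-defined regardless of whether $A$ is supersymmetric. Consequently there is no genuine obstacle; the proof is essentially a one-line reduction to the symmetric version, the only care being to confirm that the factorization $|x_{i_{2}}|\cdots|x_{i_{m}}|\le |x_{k}|^{m-1}$ goes through term-by-term in the multi-index sum, which it does by the choice of $k$.
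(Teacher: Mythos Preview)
Your argument is correct and is precisely the classical Ger\v{s}gorin proof, carried out on the $k$-th coordinate of $Ax^{m-1}=\lambda x^{[m-1]}$ where $|x_k|$ is maximal. The paper itself does not give a proof of this theorem: it simply records that Theorem~6(a) of \cite{Qi2005} extends verbatim to non-symmetric tensors and states the result without argument. Your proof supplies exactly the justification the paper omits, and your final remark---that symmetry of $A$ is never invoked because one works only with the fixed row index $k$---is the sole observation needed to pass from Qi's supersymmetric setting to the general one. One minor comment: the paper's phrasing ``absolute value of the sum'' is a slip for ``sum of the absolute values'' (as is clear from how the theorem is applied later, e.g.\ in the proof of Theorem~\ref{lapp}), and your $R_k=\sum_{(i_2,\dots,i_m)\neq(k,\dots,k)}|a_{ki_2\dots i_m}|$ is the intended radius.
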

\begin{definition} 
For a real hypermatrix $A = a_{i_1i_2\dots i_m}\in M_{m,n}$, we say that A is reducible if there
exists a nonempty proper index subset $J\subset[n]$ such that
$$a_{i_1\dots i_m }= 0,\text{ } \forall i_1\in J, \text{ }i_2,\dots i_m \notin J.$$
\end{definition}
A hypermatrix is irreducible if it is not reducible. In linear algebra it is observed that, a matrix is irreducible if and only if the underlying directed graph is strongly connected, but in case of hypermatrix theory, to explain connectivity the concept of weakly irreducible tensors has been introduced.\\
\begin{definition} 
Let $A = a_{i_1i_2\dots i_m}\in M_{m,n}$ be a real hypermatrix. Construct a directed graph $G = (V,E)$,
with the vertex set $V = [n]$, and directed edge $\vec{ij}\in E$, for all $j\in i_2,\dots i_m$ such
that $a_{i_1i_2\dots i_m}$. We call A is weakly irreducible if the associated graph G is strongly
connected.
\end{definition}

\section{Introduction to directed hypergraphs}

A hypergraph $G$ is a pair $(V, E)$ where $V$ is a set of elements called vertices,  and $E$ is a set of non-empty subsets of $V$ called  edges. Therefore,  E is a subset of $\mathcal{P}(V) \setminus\{\emptyset\}$,  where $\mathcal{P}(V)$ is the power set of $V$.

\bd{(Directed Hyperedge)}
\\Let $H=(V,E)$ be a hypergraph. An edge  $e\in E$ is called directed hyperedge if it has a partition $T_e$ (Tail) and $H_e$ (head), such that, the direction of the edge is given from tail to head.\\
\noindent A directed hyperedge $e$ is of length $m$, if $\abs{e}=m$. $\abs{T_e}$ and $\abs{H_e}$ are called tail length and head length, respectively, of the edge $e$.\\  
\ed
\bd{(Directed Hypergraph)}
\\A hypergraph $H=(V,E)$ is called directed hypergraph if it satisfies the following properties
\begin{enumerate}
\item each edge is directed edge and
\item for any two edges $e_1,e_2\in E$, $T_{e_1}\cup H_{e_1}\neq T_{e_2}\cup H_{e_2}$\\
\end{enumerate}
\ed
\noindent A directed hypergraph $H=(V,E)$ is called $m$-uniform, $\abs{e}=m,\text{ } \forall e\in E$. A directed hypergraph which is not uniform, is called directed non-uniform hypergraph. In a directed hypergraph we have two different notions for vertex degree.
\bd(Degree of a vertex)
\\Let $v\in V$ be a vertex of a directed hypergraph $H=(V,E)$. The out-degree  of $v$ in $H$ is given by,
$$d_v^{+}=\abs{\{e\in E:v\in T_{e}\}}.$$
and the in-degree is defined as,
$$d_v^{-}=\abs{\{e\in E:v\in H_{e}\}}.$$  \\
\ed
\noindent We denote $\Delta^+$, $\Delta^-$,$\delta^+$,$\delta^-$ as the highest out-degree, highest in-degree, smallest out-degree and smallest in-degree, respectively.
\section{Adjacency hypermatrix for a directed uniform hypergraph}
\bd
  Let $H=(V, E)$ be a directed $m$-uniform hypergraph, where, $V= \{ v_{1},  v_{2},  \dots,  v_{n} \}$ and $E = \{ e_{1},  e_{2},  \dots,  e_{k} \}$.
We define\textit{ the out-adjacency hypermatrix} , 
$${{A}_H}^{+} = (a_{i_{1}i_{2}\dots i_{m}}^+)\in M_{m,n}\text{ of } H \text{, as follows.}$$
For all edges $e=\{ v_{l_{1}},  v_{l_{2}},  \dots, v_{l_{k}},v_{l_{k+1}},\dots ,v_{l_m}\}\in E$, such 
that,  $T_e=\{ v_{l_{1}},  v_{l_{2}},  \dots, v_{l_{k}}\}$ and $H_e=\{v_{l_{k+1}},\dots ,v_{l_m}\}$,$$a_{i_{1}i_{2}\dots i_{k} i_{k+1\dots i_{m}}}^+=\dfrac{1}{(m-k)!(k-1)!}\text{,}$$where $i_1,i_2,\dots ,i_k$  are all distinct elements of $\tuple{l}{k}$ and $i_{k+1},\dots ,i_m$ are all distinct elements of $l_{k+1},\dots ,l_m$ and the rest of the entries are zero. \\
Similarly we define the \textit{in-adjacency hypermatrix},
  $${{A}_H}^{-} = (a_{i_{1}i_{2}\dots i_{m}}^-)\in M_{m,n},\text{ of } H.$$

For all edges $e=\{ v_{l_{1}},  v_{l_{2}},  \dots, v_{l_{m-k}},v_{l_{m-k+1}},\dots ,v_{l_m}\}\in E$, with  $T_e=\{ v_{l_{1}},  v_{l_{2}},  \dots, v_{l_{k}}\}$ and $H_e=\{v_{l_{k+1}},\dots ,v_{l_m}\}$ $$a_{i_{1}i_{2}\dots i_{k} i_{k+1}\dots i_{m}}^-=\dfrac{1}{(m-k-1)!k!}\text{,}$$where $i_1,i_2,\dots ,i_{m-k}$  are all distinct elements of $l_{k+1},\dots ,l_m$ and $i_{m-k+1},\dots ,i_m$ are all distinct elements of $\tuple{l}{k}$. All other entries are zero. \\

\ed
 \begin{lemma}\label{deg}
 For all $v_i\in V$,
\begin{enumerate}
\item $d_{v_i}^{+}= \sum_{i_{2}, i_{3}, \dots,  i_{m}=1}^{n} a_{ii_{2}i_{3}\dots i_{m}}.$
\item $d_{v_i}^{-}=\sum_{e\equiv\{v_i,v_{i_2},\dots, v_{i_m}\}\in E,\atop {v_i\in H_e}}{\dfrac{m-|T_e|}{|T_e|}}{a_{i_1 i_2\dots i_{m-1}i}}$\\
\end{enumerate}
 \end{lemma}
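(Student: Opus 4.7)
The plan is to derive both formulas by a direct, edge-by-edge combinatorial accounting of the nonzero entries of the out-adjacency hypermatrix $A_H^+$.

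For part (1), I would fix $v_i$ and evaluate the row sum $\sum_{i_2,\ldots,i_m} a^+_{i\,i_2\cdots i_m}$. Because the first index of a nonzero entry of $A_H^+$ must lie in the tail of the associated edge, only edges $e$ with $v_i\in T_e$ can contribute. For such an edge with $|T_e|=k$, the admissible tuples are obtained by permuting $T_e\setminus\{v_i\}$ across the slots $i_2,\ldots,i_k$ and $H_e$ across the slots $i_{k+1},\ldots,i_m$, yielding $(k-1)!(m-k)!$ nonzero entries, each of value $\tfrac{1}{(m-k)!(k-1)!}$. The per-edge contribution is therefore exactly $1$, and summing over all edges with $v_i$ in the tail will recover $d_{v_i}^+$.

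For part (2), I would again fix $v_i$ and, for each edge $e$ with $v_i\in H_e$ and $|T_e|=k$, collect the sum of the nonzero entries of $A_H^+$ whose last coordinate is $i$. The admissible tuples now permute $T_e$ across the slots $i_1,\ldots,i_k$ and $H_e\setminus\{v_i\}$ across the slots $i_{k+1},\ldots,i_{m-1}$, giving $k!(m-k-1)!$ nonzero entries, again each equal to $\tfrac{1}{(m-k)!(k-1)!}$. Their sum simplifies to $\tfrac{k!(m-k-1)!}{(m-k)!(k-1)!}=\tfrac{k}{m-k}=\tfrac{|T_e|}{m-|T_e|}$, so multiplying by the weight $\tfrac{m-|T_e|}{|T_e|}$ that appears in the statement normalizes each edge's contribution to $1$. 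Summing over all edges $e$ with $v_i\in H_e$ then yields $d_{v_i}^-$.

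Both parts reduce to routine combinatorial bookkeeping; the only place that needs care is matching the slots of the index tuple to tail versus head, so that the count of admissible permutations is consistent with the normalization $\tfrac{1}{(m-k)!(k-1)!}$ already built into the definition of $A_H^+$. This is the \emph{raison d'\^etre} of the asymmetric factor $\tfrac{m-|T_e|}{|T_e|}$ in part (2): it exactly cancels the mismatch between the number of tail-permutations $k!$ used in that count and the $(k-1)!$ appearing in the normalization. No machinery beyond the definition of the out-adjacency hypermatrix should be required.
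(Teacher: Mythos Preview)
Your proposal is correct and follows essentially the same approach as the paper: a direct, edge-by-edge accounting showing each relevant edge contributes exactly $1$ to the appropriate sum. The paper's proof is terser---for part (2) it simply asserts the per-edge identity $\sum_{i_1,\dots,i_{m-1}}\frac{m-|T_e|}{|T_e|}\,a_{i_1\cdots i_{m-1}i}=1$ without writing out the permutation count---whereas you carry out that count explicitly, but the underlying argument is identical.
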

 
 \begin{proof}
 \begin{enumerate}
 \item It follows from the definition of out-degree of a vertex.
 \item For an given edge $e$, if $v_{i}\in H_e$, $$\sum _{i_1,\dots, i_{m-1}\atop {v_{i_1},v_{i_2},\dots ,v_{i_{m-1}}\in e\setminus v_i}}{\dfrac{m-|T_e|}{|T_e|}}{a_{i_1 i_2\dots i_{m-1}i}}=1.$$Hence the proof follows.\\
 \end{enumerate}
 \end{proof}
Let $H=(V,E)$ be a  directed hypergraph on $n$ vertices. Let $e$ be an edge and $h\subset H_e$, such that,        $|h|=|H_e|-1$. Now we define $\mathcal{V}_{e}^{\{h\}},\mathcal{V}^{\tuple{l}{k}}\in \mathbb{R}^n$ as follows,$$\mathcal{V}_e^{\{h\}}=\sum\limits_{j,v_j\in h\cup T_e}\alpha_j\bold{1}_j,$$ 
$$\mathcal{V}^{\tuple{l}{k}}=\sum_{j\in\{ \tuple{l}{k}\}}\alpha_j\bold{1}_j,$$ where $\bold{1}_j$'s are the standard basis of $\mathbb{R}^n$, $\alpha_j$'s are scalar and $\tuple{l}{k}$ are distinct elements.
\beg
Let $H=(V,E)$ be a uniform directed hypergraph. $V=\{v_1,v_2,v_3,v_4,v_5,v_6\}$. $e=\{v_1,v_2,v_3,v_4,v_5\}\in E$, such that, $T_e=\{v_1,v_2,v_3\}$. Let us choose, $h=\{v_4\}.$ So, $\mathcal{V}_{e}^{\{h\}}=(\alpha_1,\alpha_2,\alpha_3,\alpha_4,0,0)^T$.
 And, $\mathcal{V}^{l_1,l_2,l_3}=(0,\alpha_5,\alpha_6,\alpha_7,0,0)^T$, where $l_1=2,l_2=3,l_3=4$ and $\alpha_i$'s are  scalars.
\eeg
\begin{theorem}
 Let $H=(V,E)$ be a directed $m$-uniform hypergraph on $n$ vertices such that $m\geq 3$. Then $\mathcal{V}_{e}^{\{h\}}$ and $\mathcal{V}^{\tuple{l}{k}}$ are $H$-eigenvectors of ${A_H}^{+}$ with the eigenvalue zero, where e is any edge in $H$ and $\{\tuple{l}{k}\}$ is $k(1\leq k\leq m-2)$ element subset of $\{1,2,\dots,n\}$. Moreover, their corresponding unit vectors are $Z$-eigenvectors of $A_H^+$ with the $Z$-eigenvalue zero.
 \end{theorem}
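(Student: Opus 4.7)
The plan is to first convert the eigenvalue equation into a concrete combinatorial formula. From the definition of $A_H^+$, the $(k-1)!\,(m-k)!$ index permutations of the remaining tail and head slots cancel the factorial normalisation, yielding
$$
(A_H^+ x^{m-1})_i \;=\; \sum_{e'\in E,\ v_i\in T_{e'}}\ \prod_{v_j\in e'\setminus\{v_i\}} x_j.
$$
This is essentially the calculation underlying Lemma \ref{deg}(1) (specialised to $x=\mathbf{1}$), and it reduces the theorem to the combinatorial claim that every product on the right vanishes, for every $i$, when $x$ is either of the two prescribed vectors.

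For $x=\mathcal{V}^{l_1,\dots,l_k}$, whose support has size at most $k\le m-2$, a single cardinality argument closes the case: a nonzero contribution to the sum demands $e'\setminus\{v_i\}\subseteq\mathrm{supp}(x)$, yet $|e'\setminus\{v_i\}|=m-1>m-2\ge k\ge|\mathrm{supp}(x)|$. Hence $(A_H^+ x^{m-1})_i=0$ for every $i$, matching the $H$-eigenequation $A_H^+ x^{m-1}=0\cdot x^{[m-1]}$ regardless of whether $x_i$ itself is zero.

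For $x=\mathcal{V}_e^{\{h\}}$, whose support $T_e\cup h$ has size exactly $m-1$, the same containment now forces the equality $e'=\{v_i\}\cup T_e\cup h$ for any contributing edge. I would then case-split on where $v_i$ lies. If $v_i\in T_e\cup h$, the putative edge has only $m-1$ elements and so cannot occur. If $v_i\in H_e\setminus h$, then $\{v_i\}\cup T_e\cup h=T_e\cup H_e$, so the uniqueness axiom for directed hypergraphs forces $e'=e$, which would require $v_i\in T_e$, contradicting $v_i\in H_e$. The main obstacle is the remaining subcase $v_i\notin T_e\cup H_e$: one has to rule out a ``parasitic'' edge with vertex set $\{v_i\}\cup T_e\cup h$ and $v_i$ in its tail. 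This step seems to require invoking either the convention implicit in the illustrative example that $\mathcal{V}_e^{\{h\}}$ is anchored at the specific edge $e$, or an additional structural hypothesis on $H$. Once this case is dispatched, the sum is empty and the $H$-eigenequation holds with eigenvalue $0$.

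Finally, the $Z$-eigenvector assertion is immediate: since $A_H^+ x^{m-1}$ is the zero vector, the identity $A_H^+ x^{m-1}=0\cdot x$ holds trivially, and normalising $x$ to unit Euclidean length produces a $Z$-eigenpair $(0,x/\|x\|_2)$ in the sense of Definition 2.
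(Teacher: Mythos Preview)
Your argument is far more detailed than the paper's, which simply asserts that ``the proof follows from the definition of $H$-eigenvalue and $Z$-eigenvalue'' and gives no further justification. Your reduction to the product formula $(A_H^+ x^{m-1})_i=\sum_{e':\,v_i\in T_{e'}}\prod_{v_j\in e'\setminus\{v_i\}}x_j$ is correct, and your treatment of $\mathcal{V}^{l_1,\dots,l_k}$ via the support--size comparison $m-1>m-2\ge k$ is clean and complete; the $Z$-eigenvector remark at the end is also fine.

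The gap you flag in the $\mathcal{V}_e^{\{h\}}$ case is genuine, and it is not a defect of your method but of the theorem as the paper states it. Concretely: take $m=3$, $V=\{v_1,\dots,v_5\}$, and let $e$ have $T_e=\{v_1\}$, $H_e=\{v_2,v_3\}$, $h=\{v_2\}$, so that $\mathcal{V}_e^{\{h\}}=(\alpha_1,\alpha_2,0,0,0)^T$ with arbitrary scalars $\alpha_1,\alpha_2$. If $H$ also contains an edge $e'$ with $T_{e'}=\{v_4\}$ and $H_{e'}=\{v_1,v_2\}$ (this is allowed, since $T_{e'}\cup H_{e'}\ne T_e\cup H_e$), then $(A_H^+x^{m-1})_4=x_1x_2=\alpha_1\alpha_2$ while $x_4^{m-1}=0$, so $x$ is \emph{not} an eigenvector unless $\alpha_1\alpha_2=0$. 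Hence the assertion for $\mathcal{V}_e^{\{h\}}$ with unrestricted $\alpha_j$ cannot hold in general without an additional hypothesis excluding such parasitic edges; you were right to isolate this subcase, and the paper's one-line ``proof'' does nothing to address it.
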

 \begin{proof}
The proof follows from the definition of $H$-eigenvalue and $Z$-eigenvalue.
 \end{proof}
\begin{proposition}
$\delta^+\leq\rho(A_H^+)\leq\Delta^+$.
\end{proposition}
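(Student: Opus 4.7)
The plan is to reduce the claim directly to the two preceding results: Lemma \ref{deg}(1) identifies the row sums of the out-adjacency hypermatrix with the out-degrees, and Lemma \ref{bound} bounds the spectral radius of any non-negative tensor between its minimum and maximum row sums. So the proof collapses into checking the two hypotheses of Lemma \ref{bound}, then quoting it.

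Concretely, I would first observe that $A_H^+$ is a non-negative tensor: every nonzero entry is of the form $\frac{1}{(m-k)!(k-1)!} > 0$ by the definition of $A_H^+$. Next, I would invoke Lemma \ref{deg}(1) to write
\[
r_i(A_H^+) \;=\; \sum_{i_2,i_3,\ldots,i_m=1}^{n} a^+_{i i_2 i_3 \cdots i_m} \;=\; d_{v_i}^{+}
\]
for every $i \in \{1,2,\ldots,n\}$. Taking the minimum and maximum over $i$ gives $\min_i r_i(A_H^+) = \delta^+$ and $\max_i r_i(A_H^+) = \Delta^+$.

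Finally, applying Lemma \ref{bound} to the non-negative tensor $A_H^+$ yields
\[
\delta^+ \;=\; \min_i r_i(A_H^+) \;\leq\; \rho(A_H^+) \;\leq\; \max_i r_i(A_H^+) \;=\; \Delta^+,
\]
which is the desired inequality. There is no real obstacle here; the only thing that needs care is simply recognising that the row sums of $A_H^+$ coincide exactly with the out-degrees, which is precisely the content of Lemma \ref{deg}(1). The in-degree version in Lemma \ref{deg}(2) is weighted and does not correspond to row sums, so one would need a separate argument for an analogous bound on $\rho(A_H^-)$; but that is not required for this proposition.
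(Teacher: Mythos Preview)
Your proof is correct and takes essentially the same approach as the paper, which simply writes ``The result follows from Lemma~\ref{bound}.'' You have merely made explicit the two obvious ingredients the paper leaves implicit: that $A_H^+$ is non-negative, and that Lemma~\ref{deg}(1) identifies its row sums with the out-degrees.
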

\begin{proof}
The result follows from Lemma \ref{bound}.
\end{proof} 
The above bounds can be improved and it is shown in the next theorem.
\bd
A directed hypergraph is called $k$ out(in)-regular if all the vertices have the same out(in) degree $k$.
\ed
\begin{proposition}
Let $H=(V,E)$ is $k$ out-regular hypergraph on $n$ vertices. Then $k$ is an eigenvalue of $A_H^+$. 
\end{proposition}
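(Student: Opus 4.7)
The plan is to exhibit an explicit $H$-eigenvector, namely the all-ones vector $\mathbf{1} = (1,1,\dots,1)^T \in \mathbb{R}^n$, and verify directly from the eigenvalue equation that the corresponding eigenvalue is $k$. The strategy is entirely computational and relies on the row-sum interpretation of out-degrees given in Lemma \ref{deg}(1).

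First I would compute $(A_H^+ \mathbf{1}^{m-1})_i$ using the general formula $(Ax)_i = \sum_{i_2,\dots,i_m=1}^{n} a^+_{i i_2 \dots i_m}\, x_{i_2}\cdots x_{i_m}$. Since every coordinate of $\mathbf{1}$ equals $1$, the product $x_{i_2}\cdots x_{i_m}$ collapses to $1$, so
\[
(A_H^+ \mathbf{1}^{m-1})_i \;=\; \sum_{i_2,\dots,i_m=1}^{n} a^+_{i i_2 \dots i_m} \;=\; r_i(A_H^+).
\]
By Lemma \ref{deg}(1), this row sum equals the out-degree $d_{v_i}^+$, which by the $k$ out-regularity hypothesis is $k$ for every $i$.

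Next I would note that $\mathbf{1}^{[m-1]} = \mathbf{1}$, since raising each coordinate of $\mathbf{1}$ to the power $m-1$ gives $1$ again. Combining the two observations yields
\[
A_H^+ \mathbf{1}^{m-1} \;=\; k\,\mathbf{1} \;=\; k\,\mathbf{1}^{[m-1]},
\]
which is exactly the defining equation of an $H$-eigenpair with eigenvalue $k$ and eigenvector $\mathbf{1} \in \mathbb{R}_{++}^n$. In particular $k$ is an $H^{++}$-eigenvalue. There is no real obstacle here: the only thing to be careful about is invoking Lemma \ref{deg}(1) correctly, so that the identification of the row sum of $A_H^+$ with the out-degree is justified by the definition of the out-adjacency hypermatrix (and not confused with the more delicate in-degree formula of Lemma \ref{deg}(2)). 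As a side remark, one may observe that $k = \min_i r_i(A_H^+) = \max_i r_i(A_H^+)$, so Lemma \ref{bound} yields $\rho(A_H^+) = k$ as well, which strengthens the conclusion to say that $k$ is in fact the spectral radius.
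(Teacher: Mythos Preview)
Your argument is correct and follows the same approach as the paper: exhibit the all-ones vector $\mathbf{1}$ and verify directly that it is an $H$-eigenvector of $A_H^+$ with eigenvalue $k$, using the row-sum identity $r_i(A_H^+)=d_{v_i}^+$ from Lemma~\ref{deg}(1). Your write-up is actually more detailed than the paper's, which simply asserts that $\mathbf{1}$ is an eigenvector with eigenvalue $k$ by the definition of $H$-eigenvalue.
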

\begin{proof}
Let $\bold{1}\in \mathbb{R}^n$ be a vector with all the entries are one. \\
Then by the definition of $H$-eigenvalue, $\bold{1}$ is an eigenvector of $A_H^+$ and it's corresponding  eigenvalue is $k$.
\end{proof}
\begin{definition}
Let $A,B\in M_{m,n}$. If there exists a diagonal matrix $D\in M_{2,n}$ such that $B=D^{-(m-1)}AD$. Then we call $A$ and $B$ are diagonal similar tensors.
\end{definition}
In \cite{Shao2013}, it is proved that, $A$ and $B$ are co-spectral.\\ 
\begin{theorem}
 Let $H=(V,E)$ be a directed $m$-uniform hypergraph. Let $\delta^+= d_{1}^+\leq d_{2}^+\leq \dots \leq d_{n-1}^+\leq {d}_n^+=\Delta^+$. Then $${\delta^+} ^{\frac{1}{m}}{d_{2}^+}^{1-\frac{1}{m}}\leq\rho(A_H^+)\leq {\Delta^+} ^{\frac{1}{m}}{d_{n-1}^+}^{1-\frac{1}{m}}.$$
 \begin{proof}
  Case-1 .
  If ${\delta^+}={d_2^+}$ then by the Lemma \ref{bound}, we have $$\rho(A)\geq \min_{1\leq i \leq n}r_{i}(A_H^+)={\delta^+}= {\delta^+} ^{\frac{1}{m}}{d_2^+}^{1-\frac{1}{m}}$$
  
  Case-2.
  Let ${\delta^+}<{d_2^+}$. Take a diagonal matrix, $P=diag(x,1,\dots ,1)$ with $x<1$. Then we have $$r_{1}(P^{-(m-1)}{A_H^+}P)=
  \sum _{i_2,i_3,\dots ,i_m=1}^{n} {(P^{-(m-1)}{A_H^+}P)}_{1i_{2}\dots i_m}=\frac{d_1}{x^{m-1}}$$
 Let $d_{i\{1\}}^+=\abs{ \lbrace e\in E\text{ }:i\in T_e,1\in e\rbrace}$. Then for $2\leq i\leq n$, we have, 
  $$r_{i}(P^{-(m-1)}{A_H^+}P)=
  \sum _{i_2,i_3,\dots ,i_m=1}^{n} {(P^{-(m-1)}{A_H^+}P)}_{ii_{2}\dots i_m}=xd_{i\{1\}}^++d_{i}^+-d_{i\{1\}}^+\geq xd_{i}^+\geq x{d_2^+}$$
We take $x={(\frac{{\delta^+}}{{d_2^+}})}^{\frac{1}{m}}$. Then for each $1\leq i \leq n$, we get ${\delta^+} ^{\frac{1}{m}}{d_2^+}^{1-\frac{1}{m}} \leq r_{i}(P^{-(m-1)}{A_H^+}P)$.
  \\Now $min_{1\leq i\leq n}r_{i}(P^{-(k-1)}{A_H^+}P)\geq {\delta^+} ^{\frac{1}{m}}{d_2^+}^{1-\frac{1}{m}}$.
  Again, $\rho(A_H^+)=\rho(P^{-(m-1)}{A_H^+}P)\geq{\delta^+} ^{\frac{1}{m}}{d_2^+}^{1-\frac{1}{m}}$.
  Similarly, by considering the degree sequence ${\Delta^+}\geq {d_{n-1}^+}\geq \dots \geq {\delta^+}$ and by taking the diagonal matrix $P=diag(x,1,\dots,1)$ with $x={(\frac{{\Delta^+}}{{d_{n-1}^+}})}^{\frac{1}{m}}$
  ($x\geq 1$) we get $\rho({A_H^+})\leq {\Delta^+} ^{\frac{1}{m}}{d_{n-1}^+}^{1-\frac{1}{m}}$. Thus we have $${\delta^+} ^{\frac{1}{m}}{d_2^+}^{1-\frac{1}{m}}\leq\rho(A_H^+)\leq {\Delta^+} ^{\frac{1}{m}}{d_{n-1}^+}^{1-\frac{1}{m}}.$$
 \end{proof}
\end{theorem}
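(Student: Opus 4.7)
The plan is to prove the two-sided bound by exploiting the diagonal-similarity invariance of the spectral radius (recalled just before the theorem) together with the row-sum bound of Lemma \ref{bound}. The idea is that Lemma \ref{bound} alone only gives $\delta^+ \le \rho(A_H^+) \le \Delta^+$, but a clever rescaling of the first coordinate transforms $A_H^+$ into a cospectral hypermatrix whose minimum (resp. maximum) row sum is strictly larger (resp. smaller) than $\delta^+$ (resp.\ $\Delta^+$), yielding the sharper inequalities.

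For the lower bound, I would split into the trivial case $\delta^+ = d_2^+$, where Lemma \ref{bound} directly gives $\rho(A_H^+) \ge \delta^+ = (\delta^+)^{1/m}(d_2^+)^{1-1/m}$, and the nontrivial case $\delta^+ < d_2^+$. In the latter, introduce $P = \mathrm{diag}(x,1,\ldots,1)$ with $0 < x < 1$ and consider the cospectral tensor $B := P^{-(m-1)} A_H^+ P$. First I would compute $r_1(B)$; since multiplying by $P$ on the right affects each of the $m-1$ trailing indices by the factor corresponding to the index (giving $1$ unless that index equals $1$), and since $P^{-(m-1)}$ scales the leading index $1$ by $x^{-(m-1)}$, the total contribution is $r_1(B) = d_1^+ / x^{m-1} = \delta^+/x^{m-1}$ (the edges counted at vertex $1$ through its tail inclusion give the full out-degree). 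For $i \ge 2$, the same bookkeeping shows that each edge $e \in E$ with $i \in T_e$ contributes $x$ if $1 \in e$ and $1$ otherwise, so $r_i(B) = x\, d_{i\{1\}}^+ + (d_i^+ - d_{i\{1\}}^+) \ge x\, d_i^+ \ge x\, d_2^+$. Then I would balance the two estimates by choosing $x = (\delta^+/d_2^+)^{1/m}$, so that $\delta^+/x^{m-1} = x\, d_2^+ = (\delta^+)^{1/m}(d_2^+)^{1-1/m}$, giving $\min_i r_i(B) \ge (\delta^+)^{1/m}(d_2^+)^{1-1/m}$ and therefore $\rho(A_H^+) = \rho(B) \ge (\delta^+)^{1/m}(d_2^+)^{1-1/m}$.

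For the upper bound, I would run the symmetric argument by relabeling vertices in decreasing degree order $\Delta^+ = d_n^+ \ge \cdots \ge d_1^+ = \delta^+$, applying $P = \mathrm{diag}(x,1,\ldots,1)$ with $x \ge 1$ and $x = (\Delta^+/d_{n-1}^+)^{1/m}$, and using the max-row-sum direction of Lemma \ref{bound} to conclude $\rho(A_H^+) \le (\Delta^+)^{1/m}(d_{n-1}^+)^{1-1/m}$. Combining both directions yields the theorem.

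The main obstacle I anticipate is the careful computation of $r_i(B)$ for $i \ge 2$: one must correctly account for which edges incident to $i$ in the tail also contain vertex $1$ (somewhere among the remaining $m-1$ coordinates, whether in tail or head), and verify that exactly one factor of $x$ appears per such edge once the symmetrized entries of $A_H^+$ from the definition are summed. Introducing the auxiliary quantity $d_{i\{1\}}^+ = |\{e \in E : i \in T_e,\ 1 \in e\}|$ makes this bookkeeping manageable and cleanly separates the $x$-weighted and unweighted contributions, which is exactly what drives the improvement over the trivial bound.
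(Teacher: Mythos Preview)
Your proposal is correct and follows essentially the same approach as the paper's own proof: the same case split on $\delta^+=d_2^+$ versus $\delta^+<d_2^+$, the same diagonal rescaling $P=\mathrm{diag}(x,1,\dots,1)$ applied to the cospectral tensor $P^{-(m-1)}A_H^+P$, the same auxiliary quantity $d_{i\{1\}}^+$, the same balancing choice $x=(\delta^+/d_2^+)^{1/m}$, and the symmetric argument for the upper bound. There is no substantive difference between your argument and the paper's.
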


Now for a directed hypergraph $H=(V,E)$, we define a tensor $B=b_{i_1i_2\dots i_m}\in M_{m,n}$ as, 
\[
b_{i_1i_2\dots i_m}=
\begin{cases}
\frac{\abs{T_e}}{m!} \text{ if }  e\equiv\{v_{i_1},v_{i_2},\dots, v_{i_m}\}\in E \\
0 \text{ otherwise }
\end{cases}
\]
\begin{proposition}\label{sym}
$\overline{A_H^+}\equiv B$
\end{proposition}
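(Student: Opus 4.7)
The plan is to unpack the symmetrization operator directly and show the two tensors agree entry by entry. Recall that for a non-symmetric $A \in M_{m,n}$, the supersymmetric companion $\overline{A}$ satisfying $\overline{A}x^m = Ax^m$ is obtained by averaging entries over the symmetric group: $\overline{A}_{i_1\dots i_m} = \frac{1}{m!}\sum_{\sigma \in S_m} (A)_{i_{\sigma(1)}\dots i_{\sigma(m)}}$. So I would fix an arbitrary multi-index $(i_1,\dots,i_m)$ and compute the corresponding entry of $\overline{A_H^+}$ from this formula.

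First I would handle the trivial indices. By construction, $(A_H^+)_{j_1\dots j_m}$ is nonzero only when the $j_1,\dots,j_k$ are $k$ distinct tail vertices of some edge and $j_{k+1},\dots,j_m$ are the $m-k$ distinct head vertices of the same edge. Consequently, if $\{v_{i_1},\dots,v_{i_m}\}$ is not an edge of $H$ (in particular, if the indices are not pairwise distinct), every summand in the symmetrization vanishes, matching the zero entries of $B$.

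Next, suppose $e=\{v_{i_1},\dots,v_{i_m}\}\in E$ with $|T_e|=k$. A permutation $\sigma\in S_m$ produces a nonzero summand precisely when $\{i_{\sigma(1)},\dots,i_{\sigma(k)}\}=T_e$ and $\{i_{\sigma(k+1)},\dots,i_{\sigma(m)}\}=H_e$; the number of such permutations is $k!(m-k)!$, since we may freely reorder the tail vertices among the first $k$ slots and the head vertices among the remaining $m-k$ slots. Each such summand equals $\tfrac{1}{(m-k)!(k-1)!}$ by the definition of $A_H^+$, so
\begin{equation*}
\overline{A_H^+}_{i_1\dots i_m}=\frac{1}{m!}\cdot k!(m-k)!\cdot \frac{1}{(m-k)!(k-1)!}=\frac{k}{m!}=\frac{|T_e|}{m!},
\end{equation*}
which is exactly $b_{i_1\dots i_m}$.

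There is no real obstacle here; the only thing to be careful about is the bookkeeping of which permutations survive. The key observation is that the factor $\tfrac{1}{(m-k)!(k-1)!}$ is tailored so that the $k!(m-k)!$ permutations contribute a total of $k/m!$, producing the head-weighting $|T_e|/m!$ of $B$ and confirming $\overline{A_H^+}\equiv B$.
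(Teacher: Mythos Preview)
Your proof is correct and follows the same approach as the paper, which merely states that the result ``follows from the definition of $\overline{A_H^+}$'' without giving any details. You have supplied precisely the computation the paper omits: the count of $k!(m-k)!$ surviving permutations and the resulting cancellation to $|T_e|/m!$.
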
 
\begin{proof}
The proof follows from the definition of $\overline{A_H^+}$.
\end{proof}

\begin{theorem}
Let $H$ be a directed $m$-uniform hypergraph on $n$ vertices, such that, $m$ is even. Then the maximum $Z$-eigenvalue of ${A_H}^{+}$ is less than the the maximum $Z$-eigenvalue of $\overline{{A_H}^{+}}$. 
\end{theorem}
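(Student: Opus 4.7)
The plan is to combine Proposition \ref{sym} (which identifies $\overline{A_H^+}$ with the symmetric tensor $B$) with Theorem \ref{maxz}, using the elementary observation that for a $Z$-eigenpair $(\lambda,x)$ of any tensor, contracting $Ax^{m-1}=\lambda x$ with $x$ yields $\lambda = Ax^m$ whenever $\|x\|_2=1$.

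Concretely, I would first pick a $Z$-eigenpair $(\lambda^\ast,x^\ast)$ of $A_H^+$ realizing $\lambda_{\max}^Z(A_H^+)$. By definition $\|x^\ast\|_2=1$ and $A_H^+ (x^\ast)^{m-1}=\lambda^\ast x^\ast$. Taking the inner product of both sides with $x^\ast$ gives
$$\lambda^\ast \;=\; (x^\ast)^t A_H^+ (x^\ast)^{m-1} \;=\; A_H^+ (x^\ast)^m.$$
Next, I invoke Proposition \ref{sym}, which tells us that $\overline{A_H^+}=B$ is the super-symmetric tensor associated to $A_H^+$; by the very definition of the symmetrization $\overline{A}$ recalled just before Lemma \ref{cop}, we have $A_H^+ y^m = \overline{A_H^+}\, y^m$ for every $y \in \mathbb{R}^n$. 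Substituting $y=x^\ast$ yields $\lambda_{\max}^Z(A_H^+) = \overline{A_H^+}\,(x^\ast)^m$.

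Finally, since $\overline{A_H^+}$ is real symmetric and $m$ is even, Theorem \ref{maxz} applies and gives
$$\lambda_{\max}^Z\!\left(\overline{A_H^+}\right) \;=\; \max_{\|y\|=1} \overline{A_H^+}\, y^m.$$
Because $x^\ast$ lies in the unit sphere, this maximum is at least $\overline{A_H^+}\,(x^\ast)^m = \lambda_{\max}^Z(A_H^+)$, which closes the chain.

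There is no real obstacle in this argument; the only subtle point is that the statement as written (``less than'') is in fact a non-strict inequality $\leq$, since the chain above produces equality whenever a maximizer of $y\mapsto \overline{A_H^+}\, y^m$ over the unit sphere happens to be a $Z$-eigenvector of $A_H^+$ (which is automatic when $A_H^+$ itself is already symmetric). I would either phrase the conclusion as $\lambda_{\max}^Z(A_H^+) \leq \lambda_{\max}^Z(\overline{A_H^+})$, or add a small remark explaining when equality occurs; no extra hypothesis is needed to obtain the inequality itself.
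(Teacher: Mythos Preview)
Your proof is correct and follows essentially the same line as the paper's own argument: both use Proposition~\ref{sym} to equate $A_H^+ x^m$ with $\overline{A_H^+}\, x^m$, then invoke Theorem~\ref{maxz} to bound any $Z$-eigenvalue of $A_H^+$ by the maximum $Z$-eigenvalue of $\overline{A_H^+}$. Your remark that the conclusion is actually non-strict ($\leq$) is also in agreement with the paper, whose proof only establishes $\lambda \leq \mu$.
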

\begin{proof}
Let $\lambda$ and $\mu$ be the maximum $Z$-eigenvalue of ${A_H}^{+}$ and $\overline{A_H^+}$ respectively.
Consider the optimization problem $(P)$, $\max\limits_{x\in \mathbb{R}^n}{A_H}^{+}x^m$ such that $\abs{\abs{x}}_2^m=1$.\\
By Proposition \ref{sym}, ${A_H}^{+}x^m=\overline{A_H^+}x^m$. From the theorem \ref{maxz}, we get that  the solution of $P$ is the maximum $Z$-eigenvalue of $\overline{A_H^+}$, that is, $\mu$. Since any $Z$-eigenvalue is not greater than the optimal value of $P$, then,
 $$\lambda\leq \mu.$$
\end{proof}

\begin{theorem}
$\lambda$ be an $H^+$ eigenvalue of ${A_H^+}$, then $\lambda\geq 0.$
\end{theorem}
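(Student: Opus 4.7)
The plan is to reduce this to Lemma \ref{cop} by establishing copositivity of $A_H^+$. Lemma \ref{cop} tells us that any $H^+$ eigenvalue of a copositive tensor is non-negative, so the entire statement follows once we show $A_H^+ x^m \geq 0$ for every $x \in \mathbb{R}_+^n$.

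To verify copositivity, I would first inspect the entries of $A_H^+$ as defined. By construction, each non-zero entry equals $\frac{1}{(m-k)!(k-1)!}$, where $k = |T_e|$ for the edge $e$ giving rise to that entry; in particular every entry is non-negative. Writing out
\[
A_H^+ x^m \;=\; \sum_{i_1,i_2,\dots,i_m=1}^{n} a_{i_1 i_2 \dots i_m}^+\, x_{i_1} x_{i_2}\cdots x_{i_m},
\]
each summand is a product of a non-negative coefficient with non-negative scalars $x_{i_j}$, hence non-negative. (Alternatively, one could invoke Proposition \ref{sym} to replace $A_H^+$ by its symmetrization $\overline{A_H^+}=B$, which also has only non-negative entries $|T_e|/m!$, and argue identically; but this is not needed.) This immediately yields $A_H^+ x^m \geq 0$ for all $x \in \mathbb{R}_+^n$, which is exactly the copositivity of $A_H^+$.

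With copositivity in hand, the conclusion is one line: if $\lambda$ is an $H^+$ eigenvalue with non-negative eigenvector $x$, then by Lemma \ref{cop} we have $\lambda \geq 0$.

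There is essentially no obstacle here; the only point requiring any care is recognizing that the non-negativity of the entries of $A_H^+$, combined with the restriction $x \in \mathbb{R}_+^n$ built into the definition of $H^+$-eigenvalue, is precisely what copositivity demands in this setting.
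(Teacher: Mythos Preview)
Your proof is correct and follows essentially the same route as the paper: establish that $A_H^+$ is copositive (using non-negativity of its entries) and then invoke Lemma~\ref{cop}. The paper argues copositivity via the symmetrization $\overline{A_H^+}$, which you mention as an alternative, but your direct argument from the non-negative entries of $A_H^+$ itself is equally valid and slightly more economical.
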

\begin{proof}
 $\overline{A_H^+}$ is copositive tensor, since all the entries of $\overline{A_H^+}$ are non-negative. Thus $A_H^+$ is also copositive. Since $0$ is an $H^{+}$ eigenvalue of $A_H^+$, using the Lemma \ref{cop} we get our result.
\end{proof}
Let $H_D(V,E_D)$ be the underlying undirected hypergraph of the direccted hypergraph $H(V,E)$. Then $\forall e\in E$, $T_e\cup H_e$ is an edge in $H_D$ and $\abs{E_D}=\abs{E}$.
\begin{theorem}\label{isspec}
Let $H=(V,E)$ be a directed $m$-uniform hypergraph on $n$ vertices. Let $A_H=A_H^+ +A_H^-$ and $A_{H_D}$ be the adjacency tensor of $H_D$. Then $A_H$ and $A_{H_D}$ are isospectral.
\end{theorem}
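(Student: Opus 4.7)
The plan is to prove something strictly stronger than isospectrality: that the vector-valued polynomial maps $x\mapsto A_{H}x^{m-1}$ and $x\mapsto A_{H_{D}}x^{m-1}$ coincide on all of $\mathbb{C}^{n}$. Once this is established, the two eigenvalue equations $A_{H}x^{m-1}=\lambda x^{[m-1]}$ and $A_{H_{D}}x^{m-1}=\lambda x^{[m-1]}$ are literally identical, so $A_{H}$ and $A_{H_{D}}$ share exactly the same eigenpairs, and in particular the same spectrum.

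To verify this coincidence at an arbitrary index $i\in[n]$, I would unpack the entries of $A_{H}^{+}$ and $A_{H}^{-}$ edge by edge. For the out-adjacency tensor, a nonzero entry $a^{+}_{i\,i_{2}\dots i_{m}}$ forces $i\in T_{e}$ for some edge $e$ with $k:=|T_{e}|$; the tuple $(i_{2},\dots,i_{k})$ then ranges over the $(k-1)!$ orderings of $T_{e}\setminus\{i\}$ and $(i_{k+1},\dots,i_{m})$ over the $(m-k)!$ orderings of $H_{e}$, each entry equalling $\frac{1}{(m-k)!(k-1)!}$ and contributing the monomial $\prod_{j\in e\setminus\{i\}}x_{j}$. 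The factorial prefactor cancels exactly, giving
\[
(A_{H}^{+}x^{m-1})_{i}\;=\;\sum_{e\,:\,i\in T_{e}}\,\prod_{j\in e\setminus\{i\}} x_{j}.
\]
The symmetric count for the in-adjacency tensor, in which the value $\frac{1}{(m-k-1)!\,k!}$ cancels the $(m-k-1)!\,k!$ admissible orderings, yields
\[
(A_{H}^{-}x^{m-1})_{i}\;=\;\sum_{e\,:\,i\in H_{e}}\,\prod_{j\in e\setminus\{i\}} x_{j}.
\]

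Since $T_{e}$ and $H_{e}$ partition $e$, adding the two identities gives $(A_{H}x^{m-1})_{i}=\sum_{e\ni i}\prod_{j\in e\setminus\{i\}}x_{j}$. Applying the identical bookkeeping to the standard adjacency tensor of the underlying uniform hypergraph $H_{D}$ (entries $\tfrac{1}{(m-1)!}$ on the $(m-1)!$ permutations of $e\setminus\{i\}$) produces the same expression for $(A_{H_{D}}x^{m-1})_{i}$. Hence $A_{H}x^{m-1}=A_{H_{D}}x^{m-1}$ as polynomial maps in $x$, which completes the argument.

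There is no conceptual obstacle here; the only thing to watch is that the factorial denominators in the definitions of $A_{H}^{+}$ and $A_{H}^{-}$ have been arranged precisely so that the permutation counts telescope to $1$ per incident edge, independently of the individual tail length $|T_{e}|$. Once these two cancellations are displayed cleanly, the equality of the two eigenvalue systems — and thus the isospectrality — is immediate, with no need to invoke symmetrization via Proposition \ref{sym} or any of the bounds from Lemma \ref{bound} and Theorem \ref{disk}.
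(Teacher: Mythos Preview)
Your proposal is correct and follows essentially the same route as the paper: both show directly that $(A_{H}x^{m-1})_{i}=(A_{H_{D}}x^{m-1})_{i}$ for every $i$ and every $x$, whence the eigenvalue equations coincide. The only difference is cosmetic --- you spell out the factorial cancellations that the paper leaves implicit when it passes from $(A_{H}^{+}x)_{i}+(A_{H}^{-}x)_{i}$ to the edge-indexed sum $\sum_{e\ni v_{i}}\prod_{j\in e\setminus\{i\}}x_{j}$.
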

\begin{proof}

The $i$-th component of the vector $A_Hx$ is,
\begin{align*}
(A_Hx)_i &=(A_H^+x)_i+(A_H^-x)_i\\
&=\sum\limits_{e\equiv\{v_i,v_{i_2},\dots, v_{i_m}\}\in E,\atop {\text{such that }v_i\in T_e}}x_{i_2}\dots x_{i_m}+\sum\limits_{e\equiv\{v_i,v_{i_2},\dots, v_{i_m}\}\in E,\atop {\text{such that }v_i\in H_e}}x_{i_2}\dots x_{i_m}\\
&=\sum\limits_{{\{v_i,v_{i_2}\dots v_{i_m}\}\in E_D}}x_{i_2}\dots x_{i_m}\\
&=(A_{H_D}x)_i\\
\end{align*}
Hence the result follows.
\end{proof}
\begin{corollary}
Let $H_1=(V,E_1),H_2=(V,E_2)$ be two directed $m$-uniform hypergraph on $n$ vertices such that $H_1$ and $H_2$ have the same underlying undirected hypergraph. Then $A_{H_1}$ and $A_{H_2}$ are isospectral.
\end{corollary}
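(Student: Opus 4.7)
The plan is to reduce the statement to Theorem \ref{isspec} by routing through the common underlying undirected hypergraph. Since $H_1$ and $H_2$ share the same underlying undirected hypergraph, call it $H_D=(V,E_D)$, the hypothesis gives us a single tensor $A_{H_D}$ to compare against.

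First I would invoke Theorem \ref{isspec} twice: once applied to $H_1$, which yields that $A_{H_1}=A_{H_1}^+ + A_{H_1}^-$ is isospectral to $A_{H_D}$, and once applied to $H_2$, which yields that $A_{H_2}=A_{H_2}^+ + A_{H_2}^-$ is isospectral to the same $A_{H_D}$. It is worth noting that the proof of Theorem \ref{isspec} never uses which vertices of an edge sit in the tail versus the head; the computation of $(A_Hx)_i$ collapses the tail sum and head sum into the single sum $\sum_{\{v_i,v_{i_2},\dots,v_{i_m}\}\in E_D} x_{i_2}\cdots x_{i_m}$, which depends only on the undirected edge set $E_D$. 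Hence both $A_{H_1}$ and $A_{H_2}$ act on any vector $x$ exactly like $A_{H_D}$.

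Second I would conclude by transitivity of isospectrality: any eigenpair $(\lambda,x)$ of $A_{H_D}$ is automatically an eigenpair of $A_{H_1}$ and of $A_{H_2}$, and vice versa, since the defining equation $A_{H_i}x^{m-1}=\lambda x^{[m-1]}$ is identical to $A_{H_D}x^{m-1}=\lambda x^{[m-1]}$ coordinatewise. Therefore $A_{H_1}$ and $A_{H_2}$ have exactly the same spectrum.

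There is essentially no obstacle; the corollary is a one-line consequence of Theorem \ref{isspec}. The only mildly delicate point is to make sure the argument is framed so that the eigenvalues inherit naturally — this is immediate because the action on any test vector is literally the same polynomial map, so not only do the spectra coincide as sets but the eigenvectors are preserved as well.
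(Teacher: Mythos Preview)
Your argument is correct and is exactly the intended one: the paper states this as an immediate corollary of Theorem~\ref{isspec} with no separate proof, and your two applications of that theorem followed by transitivity is precisely the implicit reasoning. There is nothing to add or correct.
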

\begin{lemma}\label{maz}
Let $H=(V,E)$ be a directed $m$-uniform  hypergraph, such that $m$ is even. Then $$\lambda_{\text{max}}^Z(A_H)=max _{||x||=1} x^{t}{A_H}x^{m-1}.$$
\end{lemma}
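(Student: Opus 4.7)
The plan is to reduce the statement for the (generally non-supersymmetric) tensor $A_H$ to the symmetric case handled by Theorem \ref{maxz}, via the adjacency tensor of the underlying undirected hypergraph. The key observation is that the proof of Theorem \ref{isspec} does more than assert isospectrality: it establishes the pointwise identity $(A_H x)_i = (A_{H_D} x)_i$ for every index $i$ and every $x\in\mathbb{R}^n$. Equivalently, as vectors, $A_H x^{m-1} = A_{H_D} x^{m-1}$ for all $x$. Contracting once more against $x$, we obtain
$$x^{t} A_H x^{m-1} = x^{t} A_{H_D} x^{m-1} \qquad \text{for all } x\in\mathbb{R}^n.$$

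Now $A_{H_D}$ is the adjacency tensor of the undirected hypergraph $H_D$, so it is supersymmetric. Since $m$ is assumed even, Theorem \ref{maxz} applies directly to $A_{H_D}$ and yields
$$\lambda_{\text{max}}^Z(A_{H_D}) = \max_{\|x\|=1} x^{t} A_{H_D} x^{m-1}.$$
On the other hand, the pointwise identity $A_H x^{m-1} = A_{H_D} x^{m-1}$ implies that $(\lambda, x)$ satisfies the defining equation of a $Z$-eigenpair for $A_H$ precisely when it does so for $A_{H_D}$; hence the two tensors share the same set of $Z$-eigenvalues and in particular $\lambda_{\text{max}}^Z(A_H) = \lambda_{\text{max}}^Z(A_{H_D})$. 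Chaining these equalities gives
$$\lambda_{\text{max}}^Z(A_H) = \lambda_{\text{max}}^Z(A_{H_D}) = \max_{\|x\|=1} x^{t} A_{H_D} x^{m-1} = \max_{\|x\|=1} x^{t} A_H x^{m-1},$$
which is the claim.

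The only real subtlety, and where I would be most careful, is to emphasize that the equality $x^{t}A_H x^{m-1} = x^{t} A_{H_D} x^{m-1}$ holds for \emph{all} $x$, not merely at a maximizer; this global identity is what allows the maximum on the right-hand side of Theorem \ref{maxz} (obtained for the symmetric tensor $A_{H_D}$) to be transferred back to $A_H$, even though Theorem \ref{maxz} itself cannot be invoked directly on the non-supersymmetric $A_H$. Beyond that, no further work is needed: everything reduces to Theorem \ref{isspec} and Theorem \ref{maxz}.
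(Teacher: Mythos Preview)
Your proof is correct and follows exactly the route the paper intends: the paper's own proof is the single sentence ``The proof follows from the Theorem \ref{maxz} and the Theorem \ref{isspec},'' and you have simply unpacked this by making explicit the pointwise identity $A_H x^{m-1}=A_{H_D}x^{m-1}$ from Theorem \ref{isspec}, the supersymmetry of $A_{H_D}$, and the consequent transfer of both $Z$-eigenvalues and the variational formula. There is nothing to add or correct.
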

\begin{proof}
The proof follows from the Theorem \ref{maxz} and the Theorem \ref{isspec}.
\end{proof}
\begin{theorem}

Let $H=(V,E)$ be a directed $m$-uniform hypergraph. Let $H_1, H_2,\dots ,H_p$ be the directed spanning subgraphs of $H$, such that, $E(H_i)$ is the $p$-partitions of $E$.
Then $\lambda_{max}^Z(A_H)\leq \sum\limits_{i=1}^p \lambda_{max}^Z(A_{H_i})$.
\end{theorem}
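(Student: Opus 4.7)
The plan is to combine the variational characterization provided by Lemma \ref{maz} with the additivity of the adjacency tensor under an edge partition. Because $m$ is even (as required to invoke Lemma \ref{maz}), we have
\[
\lambda_{\max}^{Z}(A_H) \;=\; \max_{\|x\|=1} x^{t} A_H x^{m-1} \;=\; \max_{\|x\|=1} A_H x^{m},
\]
and the analogous identity holds for each $A_{H_i}$.

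First I would verify that $A_H = \sum_{i=1}^p A_{H_i}$. Since $\{E(H_i)\}_{i=1}^p$ partitions $E$, each directed edge $e \in E$ contributes its tail/head entries to exactly one $A_{H_i}^+$ (respectively $A_{H_i}^-$). Summing over $i$ recovers $A_H^+$ and $A_H^-$, hence $A_H = A_H^+ + A_H^- = \sum_{i=1}^p (A_{H_i}^+ + A_{H_i}^-) = \sum_{i=1}^p A_{H_i}$. This additivity immediately gives the polynomial identity
\[
A_H x^{m} \;=\; \sum_{i=1}^{p} A_{H_i} x^{m} \qquad \text{for every } x \in \mathbb{R}^{n}.
\]

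Next, let $x^{*} \in \mathbb{R}^{n}$ with $\|x^{*}\| = 1$ be a maximizer for $A_H x^{m}$, whose existence follows by compactness of the unit sphere. Then by Lemma \ref{maz},
\[
\lambda_{\max}^{Z}(A_H) \;=\; A_H (x^{*})^{m} \;=\; \sum_{i=1}^{p} A_{H_i} (x^{*})^{m} \;\leq\; \sum_{i=1}^{p} \max_{\|x\|=1} A_{H_i} x^{m} \;=\; \sum_{i=1}^{p} \lambda_{\max}^{Z}(A_{H_i}),
\]
which is the desired inequality.

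I do not expect a real obstacle here beyond the bookkeeping: the only subtle point is that the inequality $\sum_i f_i(x^{*}) \leq \sum_i \max_x f_i(x)$ requires each maximizer to potentially be distinct, which is exactly why we cannot hope for equality in general. The proof therefore reduces cleanly to (i) tensor additivity from the edge partition and (ii) the standard subadditivity of maxima.
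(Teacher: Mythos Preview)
Your argument is correct and follows essentially the same route as the paper: invoke Lemma~\ref{maz} for the variational formula, use additivity $A_H=\sum_i A_{H_i}$ coming from the edge partition, and then apply subadditivity of the maximum. Your write-up is in fact more careful than the paper's, since you explicitly justify the tensor additivity and flag the hypothesis that $m$ be even (needed for Lemma~\ref{maz}, though not stated in the theorem).
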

\begin{proof}
\begin{align*}
\lambda_{max}^Z(H)&=max _{||x||=1} x^{t}{A_H}x^{m-1}\text{ (By the Lemma \ref{maz})}\\
&=max_{||x||=1} x^{t}{\sum\limits_{i=1}^p(A_{H_i})}x^{m-1}\\
&\leq {\sum\limits_{i=1}^p max _{||x||=1} x^{t}(A_{H_i})}x^{m-1}\\
&= \sum\limits_{i=1}^p \lambda_{max}^Z(A_{H_i})\\
\end{align*}
\end{proof}

\section{Directed non-uniform hypergraphs}
\begin{definition}
Let $H=(V, E)$ be a directed non-uniform  hypergraph where $V= \{ v_{1},  v_{2},  \dots,  v_{n} \}$ and $E = \{ e_{1},  e_{2},  \dots,  e_{t} \}$. Let $m=max\{|T_{e_i}\cup H_{e_i}| : e_{i}\in E\}$ be the maximum cardinality of edges,  $m.c.e(H)$ in $H$,  that is rank$(H)=m$.
  Now we define the out-adjacency hypermatrix ${A}_H^+ = (a_{i_{1}i_{2}\dots i_{m}}^+)\in M_{m,n}$  of $H$ as follows.\\
  For any edges $e=\{ v_{l_{1}},  v_{l_{2}},  \dots, v_{l_{k}},v_{l_{k+1}},\dots, v_{l_s}\}\in E$, such that, $T_e=\{ v_{l_{1}},  v_{l_{2}},  \dots, v_{l_{k}}\}$ and $H_e=\{v_{l_{k+1}},\dots, v_{l_s}\}$, 
   
  $$a_{ p_{1}p_{2}\dots p_{t}\dots p_m}^+=\frac{k}{\alpha} \text{ where } k\leq t\leq m-s+k \text{ and }$$ $$\alpha =\sum\limits_{r=0}^{m-s}\left(\sum\limits_{t_{1}, t_{2}, \dots, t_{k} \geq 1, \atop{ \sum t_{i}=r+k}}  \frac{(r+k)!}{t_{1}! t_{2}!\dots t_{k}!}\right)\left(\sum\limits_{t_{1}, t_{2}, \dots, t_{s-k} \geq 1, \atop{ \sum t_{i}=m-k-r}}  \frac{(m-k-r)!}{t_{1}! t_{2}!\dots t_{s-k}!}\right), $$
    where $p_{1}, p_{2},  \dots,  p_{t}$ are chosen in all possible way  from $\{l_{1}, l_{2}, \dots, l_{k}\}$ with at least once for each element of the set and $p_{t+1},\dots,p_m$ are chosen from in all possible way from $\{l_{k+1},\dots , l_{s}\}$ with atleast once for each element of the set. The rest of the entries of $A_H^+$ are zero.\\\\
  Similarly, we define the in-adjacency matrix $A_H^-=(a_{i_{1}i_{2}\dots i_{m}}^-)\in M_{m,n}$ of $H$. For all edges $e=\{ v_{l_{1}},  v_{l_{2}},  \dots, v_{l_{k}},v_{l_{k+1}},\dots, v_{l_s}\}\in E$, such that, $T_e=\{ v_{l_{1}},  v_{l_{2}},  \dots, v_{l_{k}}\}$ and $H_e=\{v_{l_{k+1}},\dots, v_{l_s}\}$ ,  
  $$a_{ p_{1}p_{2}\dots p_{t}\dots p_m}^-=\frac{s-k}{\alpha},  \text{ where } k\leq t\leq m-s+k.$$ Here,  $p_{1}, p_{2},  \dots,  p_{t}$  are chosen in all possible way  from $\{l_{k+1}, \dots, l_{s}\}$ with at least once for each element of the set and $p_{t+1},\dots,p_m$ are chosen from in all possible way from $\{\tuple{l}{k}\}$ with atleast once for each element of the set. The rest of the entries of $A_H^-$ are zero.
\end{definition}
\noindent Clearly, $d_{v_i}^{+}= \sum\limits_{i_{2}, i_{3}, \dots,  i_{m}=1}^{n} a_{ii_{2}i_{3}\dots i_{m}}^+$ and $d_{v_i}^{-}= \sum\limits_{i_{2}, i_{3}, \dots,  i_{m}=1}^{n} a_{ii_{2}i_{3}\dots i_{m}}^-.$\\
\beg
Let $H=(V,E)$ be a  directed non-uniform hypergraph, such that, $V=\{1,2,3,4,5\}, E=\{e_1,e_2\}$, where  $T_{e_1}=\{1,2\}, H_{e_1}=\{3\},T_{e_2}=\{1,4\},H_{e_2}=\{2,5\}$. Then the non zero entries of $A_H^+$ are
 $a_{1233}^+=a_{1223}^+=a_{1123}^+=a_{1213}^+=a_{2133}^+=a_{2123}^+=a_{2213}^+=a_{2113}^+=\frac{1}{4}\text{, }a_{1425}^+=a_{1452}^+=a_{4125}^+=a_{4152}^+=\frac{1}{2}.$\\
 The non zero entries of $A_H^-$ are $a_{3122}^-=a_{3121}^-=a_{3112}^-=a_{3211}^-=a_{3212}^-=a_{3221}^-=a_{3312}^-=a_{3321}^-=\frac{1}{8},\text{ } a_{2541}^-=a_{2514}^-=a_{5241}^-=a_{5214}^-=\frac{1}{2}.$
\eeg
Now, the following theorems for directed non-uniform hypergraphs can be constructed similar to the theorems for directed uniform hypergraphs.
\begin{theorem}
Let $H=(V,E)$ be a directed non-uniform hypergraph with $m.c.e(H)\geq 3$. Suppose $c=\min\limits_{e\in E}\{|T_e|+|H_e|\}$, that is, the $corank(H)$ is $c$ and $\{\tuple{i}{k}\}$, $1\leq k\leq c-2$, is a $k$ element subset of $\{1,2,\dots,n\}$. Then for all $e\in E$, $\mathcal{V}_{e}^{\{h\}}$ and $\mathcal{V}^{\tuple{l}{k}}$ are $H$-eigenvectors of ${A_H}^{+}$ with the $H$-eigenvalue zero. Moreover, their corresponding unit vectors are the $Z$-eigenvectors of $A_H^+$ with the $Z$-eigenvalue zero.
\end{theorem}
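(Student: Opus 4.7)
The plan is to carry out the non-uniform analog of the argument used for the uniform case, verifying $A_H^+ x^{m-1} = 0$ componentwise for each of the two claimed vector families; the $H$-eigenvalue equation is then immediately satisfied with $\lambda = 0$, and the assertion for the corresponding unit vectors as $Z$-eigenvectors follows from the homogeneity of $A_H^+ x^{m-1}$ in $x$ together with the defining equations $A_H^+ x^{m-1} = \lambda x$, $\|x\|_2 = 1$, of a $Z$-eigenpair.

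For $x = \mathcal{V}^{l_1,\dots,l_k}$, I would expand
\[
(A_H^+ x^{m-1})_i \;=\; \sum_{i_2,\dots,i_m=1}^{n} a^+_{i\,i_2 \cdots i_m}\, x_{i_2} \cdots x_{i_m}
\]
and observe that any non-vanishing summand forces every $i_j$ into the support $\{l_1,\dots,l_k\}$ and, by the construction of the non-uniform out-adjacency tensor, forces the set of distinct values among $\{i, i_2,\dots,i_m\}$ to equal the underlying vertex set of some edge $e' \in E$ with $i \in T_{e'}$. The distinct-value set has at most $k+1 \le c-1$ elements, strictly fewer than the $c \le |T_{e'} \cup H_{e'}|$ required; no such edge exists, and the sum vanishes for every $i$. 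This is precisely the step where the corank $c$ plays the role occupied by the order $m$ in the uniform analog.

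For $x = \mathcal{V}_e^{\{h\}}$ the support $T_e \cup h$ has size $|e|-1$, so a pure cardinality argument no longer closes. Instead, I would exploit the tail/head rigidity built into the non-zero pattern of $A_H^+$: a surviving summand at index $i$ arises from some edge $e' \in E$ with $i \in T_{e'}$, $T_{e'} \subseteq \{i\} \cup T_e \cup h$, and crucially $H_{e'} \subseteq T_e \cup h$ (because head positions of the non-zero entry can only be filled by head vertices of $e'$). The directed-hypergraph axiom that an edge is determined by its underlying vertex set, combined with $h \subsetneq H_e$, is then invoked to rule such $e'$ out.

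This second case is the main obstacle. In the uniform analog a clean cardinality mismatch makes the corresponding step one line; in the non-uniform setting one instead has to track which smaller or differently-partitioned edges could lurk inside $(T_e \cup h) \cup \{i\}$, and the argument must lean on the strict segregation of tail and head positions in the non-zero pattern of $A_H^+$ rather than on edge-size alone. Once $A_H^+ x^{m-1} = 0$ is verified in both cases, the $H$-eigenvector conclusion is automatic, and normalising $x$ to $\hat{x} = x/\|x\|_2$ yields the $Z$-eigenvector assertion via $A_H^+ \hat{x}^{m-1} = 0 = 0 \cdot \hat{x}$.
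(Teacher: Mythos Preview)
The paper offers no argument for this theorem beyond the blanket remark that the non-uniform results ``can be constructed similar to'' the uniform ones, and the uniform analogue is in turn dispatched with ``the proof follows from the definition.'' So on the level of method there is nothing substantive to compare against; your write-up is already far more detailed than what the paper supplies. Your treatment of $\mathcal{V}^{l_1,\dots,l_k}$ is the correct elaboration: a nonzero summand would force the distinct index set to equal the full vertex set of some edge, of size at least $c$, while the support together with the row index $i$ furnishes at most $k+1\le c-1$ indices.

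The gap is in your second case. The directed-hypergraph axiom you appeal to only says that two distinct edges cannot share the \emph{same} underlying vertex set; it does not prevent the vertex set of some other edge $e'$ from being contained in $\{i\}\cup T_e\cup h$. Concretely, take $V=\{1,2,3,4,5\}$, $e=e_1$ with $T_{e_1}=\{1,2\}$, $H_{e_1}=\{3,4\}$, and a second edge $e_2$ with $T_{e_2}=\{1\}$, $H_{e_2}=\{2,3\}$; with $h=\{3\}$ the support of $\mathcal{V}_{e_1}^{\{h\}}$ is $\{1,2,3\}$, and the entries of $A_H^+$ coming from $e_2$ (for instance $a^+_{1123}$, $a^+_{1223}$) give a nonzero contribution to $(A_H^+x^{m-1})_1$. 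Your ``tail/head rigidity'' observation that $H_{e'}\subseteq T_e\cup h$ is correct as far as it goes, but that containment is not itself a contradiction. The same obstruction already arises in the uniform setting whenever $i\notin T_e\cup h$ and some other $m$-edge happens to have vertex set $\{i\}\cup T_e\cup h$. In short, the step ``invoke the axiom to rule such $e'$ out'' does not go through as written, and it is not clear it can be repaired without an additional hypothesis on $H$ that neither you nor the paper supplies.
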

\begin{theorem}
$\delta^+\leq\rho(A_H^+)\leq\Delta^+$.\\
\end{theorem}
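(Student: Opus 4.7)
The plan is to reduce the statement to Lemma \ref{bound} exactly as in the uniform analogue (Proposition following Lemma \ref{deg}), since the non-uniform out-adjacency tensor $A_H^+$ was designed precisely so that its row sums coincide with out-degrees. Concretely, the argument will proceed in three short steps.

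First, I would observe that $A_H^+$ is a non-negative tensor. This is immediate from the construction: every nonzero entry has the form $k/\alpha$ where $k=|T_e|\ge 1$ and the normalising constant $\alpha$ (the double sum over compositions) is strictly positive. Hence Lemma \ref{bound} applies and gives
\[
\min_{1\le i\le n} r_i(A_H^+) \;\le\; \rho(A_H^+) \;\le\; \max_{1\le i\le n} r_i(A_H^+),
\]
where $r_i(A_H^+)=\sum_{i_2,\dots,i_m=1}^{n} a^+_{i i_2\dots i_m}$.

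Second, I would invoke the identity $d_{v_i}^{+}= \sum_{i_{2}, \dots,  i_{m}=1}^{n} a_{i i_{2}\dots i_{m}}^+$, which is stated immediately after the definition of $A_H^+$ for non-uniform hypergraphs and which is precisely why $\alpha$ is defined the way it is (it guarantees that for each edge $e$ with $v_i\in T_e$, the entries $a^+_{i i_2\dots i_m}$ supported on $e$ sum to exactly $1$). In other words, $r_i(A_H^+)=d_{v_i}^+$ for every $i$.

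Combining these, $\min_i r_i(A_H^+)=\delta^+$ and $\max_i r_i(A_H^+)=\Delta^+$, which yields $\delta^+\le \rho(A_H^+)\le \Delta^+$. The only subtlety worth double-checking is that the row-sum identity $r_i(A_H^+)=d_{v_i}^+$ really does hold for every edge type; but this is a direct combinatorial unpacking of the normalisation $\alpha$, and it is already asserted in the text preceding the theorem. So no substantive obstacle is expected, and the proof is essentially a one-line appeal to Lemma \ref{bound} combined with the row-sum identity.
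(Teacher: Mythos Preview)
Your proposal is correct and follows exactly the paper's approach: the paper simply invokes Lemma~\ref{bound} (as in the uniform case) together with the row-sum identity $r_i(A_H^+)=d_{v_i}^+$ stated right after the definition of $A_H^+$. The only extra thing you spell out---non-negativity of $A_H^+$---is implicit in the paper and in any case immediate.
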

\begin{theorem}
Let $H=(V,E)$ is $k$ out-regular hypergraph. Then $k$ is an eigenvalue. 
\end{theorem}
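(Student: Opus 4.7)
The plan is to mimic exactly the argument used for the analogous result in the uniform case (the proposition stating that $k$ is an eigenvalue of $A_H^+$ whenever $H$ is $k$ out-regular), the only new ingredient being that the relation between the out-degree and the row sums of $A_H^+$ still holds in the non-uniform setting. This is precisely the remark placed just after the definition of the non-uniform out-adjacency hypermatrix, namely
\[
d_{v_i}^{+}= \sum_{i_{2}, i_{3}, \dots, i_{m}=1}^{n} a_{i i_{2} i_{3}\dots i_{m}}^+ ,
\]
so I may assume this identity freely.

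Concretely, I would take the all-ones vector $\mathbf{1}\in\mathbb{R}^n$ and compute the $i$-th entry of $A_H^+\,\mathbf{1}^{m-1}$. By the definition of the tensor-vector product,
\[
(A_H^+\mathbf{1}^{m-1})_i \;=\; \sum_{i_{2}, \dots, i_{m}=1}^{n} a_{i i_{2}\dots i_{m}}^+ \;=\; d_{v_i}^{+} .
\]
Since $H$ is $k$ out-regular, $d_{v_i}^{+}=k$ for every $i$, so the right-hand side equals $k = k\cdot 1^{m-1} = k\,(\mathbf{1}^{[m-1]})_i$. Therefore $A_H^+\,\mathbf{1}^{m-1}=k\,\mathbf{1}^{[m-1]}$, which exhibits $k$ as an $H$-eigenvalue of $A_H^+$ with eigenvector $\mathbf{1}$.

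There is essentially no obstacle: the combinatorial normalisation $\alpha$ in the non-uniform definition is engineered precisely so that the row-sum of $A_H^+$ at vertex $v_i$ recovers the out-degree $d_{v_i}^+$, and once this identity is in hand the proof is a one-line substitution. The only point one has to verify, should one wish to be fully explicit, is the row-sum identity itself, which follows by grouping the nonzero entries $a^+_{p_1\dots p_m}$ of each edge $e$ with $v_i\in T_e$ according to the multiplicities $(t_1,\dots,t_k)$ and $(t_1,\dots,t_{s-k})$ of tail and head repetitions and observing that the multinomial counts in the numerator match the denominator $\alpha$ exactly; but this is already asserted in the text and thus need not be reproved here.
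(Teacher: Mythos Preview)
Your proof is correct and follows exactly the paper's approach: the paper explicitly states that the non-uniform theorems are proved ``similar to the theorems for directed uniform hypergraphs,'' and the uniform proof consists precisely of taking $\mathbf{1}$ as eigenvector and invoking the row-sum/out-degree identity. The only extra ingredient you use, $d_{v_i}^{+}=\sum_{i_2,\dots,i_m} a^+_{ii_2\dots i_m}$ in the non-uniform case, is asserted verbatim in the paper immediately after the non-uniform definition, so nothing further is needed.
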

\begin{theorem}
 let $H=(V,E)$ be a directed non-uniform hypergraph, such that, $m.c.e(H)=m$. Let $A_H^+$ be the in-adjacency hypermatix of $H$.
 Let $\delta^+\leq d_{2}^+\leq \dots \leq d_{n-1}^+\leq\Delta^+$. Then $${\delta^+} ^{\frac{1}{m}}{d_{2}^+}^{1-\frac{1}{m}}\leq\rho(A_H^+)\leq {\Delta ^+}{\frac{1}{m}}{d_{n-1}^+}^{1-\frac{1}{m}}.$$
\end{theorem}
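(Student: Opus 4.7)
The plan is to mirror the proof of the analogous bound for directed uniform hypergraphs, combining Lemma \ref{bound} (which sandwiches $\rho$ of a non-negative tensor between its minimum and maximum row sums), the observation noted just after the definition of $A_H^+$ that the row sum $r_i(A_H^+)$ equals $d_i^+$ in the non-uniform case as well, and the cospectrality of diagonally similar tensors from \cite{Shao2013}.

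First I would dispose of the trivial case. If $\delta^+ = d_2^+$, Lemma \ref{bound} applied directly yields $\rho(A_H^+) \geq \delta^+ = (\delta^+)^{1/m}(d_2^+)^{1 - 1/m}$, and the symmetric argument handles the upper bound when $\Delta^+ = d_{n-1}^+$. In the case $\delta^+ < d_2^+$, I would relabel the vertices so that $d_1^+ = \delta^+$ and introduce the diagonal matrix $P = \text{diag}(x, 1, \dots, 1)$ with $0 < x < 1$, forming the diagonally similar tensor $B = P^{-(m-1)} A_H^+ P$. Using the tensor product from Section 2, the entry $b_{i\, i_2 \dots i_m}$ equals $x^{-(m-1)} a^+_{i\, i_2 \dots i_m} x^{j}$ when $i = 1$ and $a^+_{i\, i_2 \dots i_m} x^{j}$ when $i \neq 1$, where $j$ counts the occurrences of the index $1$ among $i_2, \dots, i_m$. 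Computing row sums, one seeks the estimates $r_1(B) \geq d_1^+/x^{m-1}$ and $r_i(B) \geq x\, d_i^+ \geq x\, d_2^+$ for $i \geq 2$; taking $x = (\delta^+/d_2^+)^{1/m}$ then balances both lower bounds at the value $(\delta^+)^{1/m}(d_2^+)^{1 - 1/m}$, and Lemma \ref{bound} applied to the cospectral tensor $B$ closes the lower bound. The upper bound follows by the symmetric argument with the reverse degree ordering $\Delta^+ \geq d_{n-1}^+ \geq \cdots \geq \delta^+$ and $x = (\Delta^+/d_{n-1}^+)^{1/m} \geq 1$.

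The main obstacle is the non-uniform bookkeeping on $r_1(B)$. Unlike the uniform setting, a short edge (length $s < m$ with $v_1 \in T_e$) contributes nonzero entries $a^+_{1\, i_2 \dots i_m}$ in which the index $1$ appears with multiplicity across the $m$ slots, so the scalings in the sum $\sum x^{-(m-1) + j}\, a^+_{1\, i_2 \dots i_m}$ interpolate between $x^{-(m-1)}$ and $x^{0}$. The clean identity $r_1(B) = d_1^+/x^{m-1}$ used in the uniform proof must therefore be reestablished by grouping contributions to $r_1(B)$ edge by edge and, within each edge, by the multiplicity pattern of the index $1$ among $(i_2, \dots, i_m)$, summing the multinomial coefficients built into the definition of $a^+$ in order to extract the required estimate (or, alternatively, refining the choice of $P$ to compensate for short edges). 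Once this index-tracking step is carried out, the remainder of the proof assembles from Lemma \ref{bound} and the same calculation scheme used in the uniform case.
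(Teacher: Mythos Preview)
Your plan is exactly the route the paper itself takes: the paper supplies no separate proof for the non-uniform statement and simply declares that it ``can be constructed similar to'' the uniform theorem. So at the level of strategy you are in full agreement with the paper, and in fact you have gone further than the paper by isolating where the uniform argument meets resistance.

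That said, the obstacle you flag is not merely a bookkeeping nuisance; with the diagonal similarity $B=P^{-(m-1)}A_H^+P$, $P=\mathrm{diag}(x,1,\dots,1)$, $0<x<1$, the needed inequality $r_1(B)\ge d_1^+/x^{m-1}$ actually points the wrong way. For a short edge $e$ with $v_1\in T_e$, the nonzero entries $a^+_{1\,i_2\dots i_m}$ carry repetitions of the index $1$ among $i_2,\dots,i_m$, so the scaling factor is $x^{-(m-1)+j}$ with $j\ge 1$, and since $x<1$ this is \emph{smaller} than $x^{-(m-1)}$; summing gives only $r_1(B)\le d_1^+/x^{m-1}$, which is useless for the lower bound on $\min_i r_i(B)$. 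The companion estimate $r_i(B)\ge x\,d_i^+$ for $i\ge 2$ is likewise endangered: if $v_1$ lies in a short edge containing $v_i$ in its tail, then the index $1$ can repeat with multiplicity $j_1\ge 2$ among $i_2,\dots,i_m$, giving factors $x^{j_1}<x$. Thus the uniform computation does not transfer by ``grouping contributions edge by edge'' alone, and a genuinely different balancing (or a different diagonal $P$ adapted to the multiplicity structure of the short edges) would be required. Your proposal names the right difficulty but does not close it; the paper, for its part, does not acknowledge the difficulty at all.
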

The above theorems are also hold for the out-adjacency hypermatrix.
\section{Laplacian hypermatrix}
\begin{definition}
  Let $H=(V, E)$ be a directed non-uniform hypergraph on $n$ vertices and $m.c.e(H)=m$. We define the out-Laplacian hypermatrix $L_{H}^+=(l_{i_1\dots i_m}^+)\in M_{m,n}$ for $H$, as $$L_{H}^+=D_{H}^+-{A}_{H}^+,$$ where $D_{H}^+=(d_{i_1\dots i_m}^+)\in M_{m,n},$ is the diagonal hypermatrix, called out-degree hypermatrix of $H$ with $d_{ii\dots i}^+=d^+(v_i)$ and other entries are zero.
The out-signless Laplacian hypermatrix of $H$ is defined as $\mathbb{L}_{H}^+={D_H}^{+} +{A}_{H}^+.$\\

Similarly, we can define in-Laplacian hypermatrix for $H$ as $L_{H}^-=D_{H}^--\mathcal{A}_{H}^-$, where   $D_H^-=(d_{i_1\dots i_m}^+)\in M_{m,n}$ is in-degree hypermatrix of $H$, where the diagonal entry $d_{ii\dots i}^-=d^-(v_i)$ and the rest of the entries are zero. The in-signless Laplacian hypermatrix of $H$ is defined as $\mathbb{L}_{H}^-={D_H}^{-} +\mathcal{A}_{H}^-.$
\end{definition}

We denote $L_H=L_H^++L_H^-$ and $\mathbb{L}_H=\mathbb{L}_H^++\mathbb{L}_H^-$ 
\begin{theorem}
Let $H=(V, E)$ be a directed $m$-uniform hypergraph. Let $L_{H_D}$ and $\mathbb{L}_{H_D}$ be the Laplacian and signless Laplacian hypermatrices, respectively of $H_D$.
Then $L_H$ and $L_{H_D}$ are isospectral, as well as, $\mathbb{L_H}$ and $\mathbb{L}_{H_D}$ are isospectral.
\end{theorem}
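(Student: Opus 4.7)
The plan is to mirror the proof of Theorem \ref{isspec} and reduce the Laplacian statement to a degree-matching identity plus that theorem.

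First I would establish the diagonal identity $D_H^+ + D_H^- = D_{H_D}$. Because in every directed hyperedge $T_e$ and $H_e$ form a partition of $e$, each vertex $v_i$ lies in $T_e$ or in $H_e$ but never both, so
\begin{align*}
d_{v_i}^+ + d_{v_i}^- &= |\{e \in E : v_i \in T_e\}| + |\{e \in E : v_i \in H_e\}| \\
&= |\{e \in E_D : v_i \in e\}| = d_{H_D}(v_i).
\end{align*}
Since $D_H^+$, $D_H^-$ and $D_{H_D}$ are all diagonal hypermatrices of the same order and dimension, this componentwise equality gives the claimed identity.

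Next I would compute $(L_H x)_i$ and collapse it to $(L_{H_D} x)_i$. Writing $L_H = (D_H^+ + D_H^-) - A_H$ and invoking Theorem \ref{isspec}, which asserts $(A_H x)_i = (A_{H_D} x)_i$ for every $i$ and every $x$, I get
\begin{align*}
(L_H x)_i &= (d_{v_i}^+ + d_{v_i}^-)\, x_i^{m-1} - (A_H x)_i \\
&= d_{H_D}(v_i)\, x_i^{m-1} - (A_{H_D} x)_i = (L_{H_D} x)_i.
\end{align*}
Hence the eigenequation $L_H x^{m-1} = \lambda x^{[m-1]}$ is, coordinate by coordinate, the same polynomial system as $L_{H_D} x^{m-1} = \lambda x^{[m-1]}$; any eigenpair of one is an eigenpair of the other, so $L_H$ and $L_{H_D}$ have identical spectra.

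The signless case is the same argument with a sign change in the adjacency part: $(\mathbb{L}_H x)_i = d_{H_D}(v_i)\, x_i^{m-1} + (A_{H_D} x)_i = (\mathbb{L}_{H_D} x)_i$, so $\mathbb{L}_H$ and $\mathbb{L}_{H_D}$ are isospectral as well. The only genuine content of the proof is the degree partition identity $d_v^+ + d_v^- = d_{H_D}(v)$, which is built into the definition of a directed hyperedge, so beyond Theorem \ref{isspec} there is no real obstacle.
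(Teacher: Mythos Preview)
Your proposal is correct and follows essentially the same approach as the paper: establish the diagonal identity $D_H^+ + D_H^- = D_{H_D}$ and then appeal to Theorem~\ref{isspec}. The only minor point is that Theorem~\ref{isspec} is \emph{stated} as an isospectrality result, whereas you invoke the stronger coordinate identity $(A_H x)_i = (A_{H_D} x)_i$; but that identity is exactly what the proof of Theorem~\ref{isspec} actually shows, so your use of it is legitimate.
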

\begin{proof}
Let $D_{H_D}$ be the degree hypermatrix of $H_D$ i.e, $D_{H_D}$  is a diagonal hypermatrix where diagonal entries are the degrees of the $H$ vertices of $H_D$. Then,$$D_{H_D}=D_H^++D_H^{-}.$$ Now by the Theorem \ref{isspec} we get the desired result.
\end{proof}
\begin{corollary}
$L_H$ is copositive for directed uniform hypergraph.  
\end{corollary}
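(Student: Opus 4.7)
The plan is to upgrade the isospectrality statement of the preceding theorem to the stronger pointwise identity $L_H x^m = L_{H_D} x^m$ on all of $\mathbb{R}^n$, and then invoke the fact that the Laplacian tensor of an undirected uniform hypergraph is copositive, which is a direct consequence of AM--GM applied edge by edge.

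First, I would observe that the proof of the preceding isospectrality theorem in fact establishes $(A_H x)_i = (A_{H_D} x)_i$ for every coordinate $i$ and every $x \in \mathbb{R}^n$, which is strictly stronger than co-spectrality; contracting with $x_i$ and summing over $i$ gives the homogeneous polynomial identity $A_H x^m = A_{H_D} x^m$. Combined with the identity of diagonal tensors $D_{H_D} = D_H^+ + D_H^-$ used in that same proof, this yields $L_H x^m = D_H x^m - A_H x^m = D_{H_D} x^m - A_{H_D} x^m = L_{H_D} x^m$ as polynomials in $x_1, \dots, x_n$.

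Next, I would verify that $L_{H_D}$ is copositive. Unwinding the definitions, each edge $e\in E_D$ of the underlying $m$-uniform undirected hypergraph contributes $\sum_{v_i \in e} x_i^m$ from the degree (diagonal) part and $m\prod_{v_i \in e} x_i$ from the adjacency part, so
$$L_{H_D} x^m = \sum_{e \in E_D}\left(\sum_{v_i \in e} x_i^m \;-\; m\prod_{v_i \in e} x_i\right).$$
For $x \in \mathbb{R}_+^n$, each summand is non-negative by the AM--GM inequality applied to the $m$ non-negative quantities $\{x_i^m : v_i \in e\}$. Hence $L_H x^m = L_{H_D} x^m \geq 0$ on $\mathbb{R}_+^n$, which is exactly the copositivity of $L_H$.

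The only non-trivial step is extracting the pointwise equality $L_H x^m = L_{H_D} x^m$ from the preceding theorem (whose statement is only about the spectrum); once one reads off that the proof there actually matched coordinates of $A_H x^{m-1}$ and $A_{H_D} x^{m-1}$, the rest is the standard AM--GM argument and no further calculation is needed.
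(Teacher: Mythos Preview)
Your proposal is correct and follows exactly the route the paper intends: the corollary is stated immediately after the theorem whose proof establishes $(A_Hx)_i=(A_{H_D}x)_i$ and $D_{H_D}=D_H^++D_H^-$, and copositivity then reduces to that of the undirected Laplacian $L_{H_D}$, which is a known fact from the cited literature. The paper gives no separate argument, so your explicit AM--GM verification of $L_{H_D}x^m\ge 0$ simply fills in a detail the authors took for granted; nothing further is needed.
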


\begin{theorem}\label{lapp}
Let $H=(V, E)$ be a directed hypergraph and $L_{H}^+$ and be its corresponding out-Laplacian hypermatrix. Then,
\begin{enumerate}[(i)]
\item $0$ is an $H$- and $Z$- eigenvalue of $L_{H}^+$.
\item $\rho(L_{H}^+)\leq 2\Delta^+$.
\item If $\lambda$ be an $H$-eigenvalue of $L_H^+$, then $0\leq\lambda\leq 2\Delta^+$.
\item If $m.c.e(H)\geq 3$, $(d^{+}(i),\bold{1}_i)$ is an eigenpair of $L_H^+$.\\
\item If $m.c.e(H)\geq 3$, then, $0\leq \lambda_{min}(L_{H}^+)\leq \delta^+\leq \Delta^+ \leq \lambda_{max}(L_H^+) \leq 2\Delta^+$, where $\lambda_{min}$ and $\lambda_{max}$ are the minimum and maximum eigenvaluse of ${L}_H^+$, respectively. 
\end{enumerate}
\end{theorem}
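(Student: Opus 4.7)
The plan is to work through the five assertions in order, leveraging the decomposition $L_H^+ = D_H^+ - A_H^+$ together with the row-sum identity $r_i(A_H^+) = d_i^+$ from Lemma \ref{deg}(1). For part (i) I would test the all-ones vector $\mathbf{1}$: the diagonal factor gives $(D_H^+\mathbf{1}^{m-1})_i = d_i^+$ and the adjacency factor gives $(A_H^+\mathbf{1}^{m-1})_i = r_i(A_H^+) = d_i^+$, so $L_H^+\mathbf{1}^{m-1} = 0$, and $\mathbf{1}/\sqrt{n}$ then serves as the corresponding $Z$-eigenvector. For parts (ii) and (iii) I would invoke Theorem \ref{disk}: the diagonal entry of $L_H^+$ at $(i,\dots,i)$ is $d_i^+$, while the absolute values of the off-diagonal entries in row $i$ also sum to $d_i^+$, so every eigenvalue $\lambda$ lies in some disk $\{z : |z - d_i^+| \le d_i^+\}$. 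This bounds $|\lambda| \le 2d_i^+ \le 2\Delta^+$, and when $\lambda$ is real the disk collapses to $0 \le \lambda \le 2\Delta^+$.

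For part (iv) I would compute $L_H^+\mathbf{1}_i^{m-1}$ coordinate-wise. The contribution of $D_H^+$ is clearly $d_i^+\mathbf{1}_i^{[m-1]}$, while that of $A_H^+$ reduces for each $j$ to the single entry $a^+_{j\,i\,\cdots\,i}$, with $i$ repeated $m-1$ times. Under the edge-size hypothesis, any non-zero entry of $A_H^+$ must come from a tuple carrying at least three distinct indices, so this entry vanishes and I obtain $L_H^+\mathbf{1}_i^{m-1} = d_i^+\mathbf{1}_i^{[m-1]}$. For part (v) I would combine (iii) with (iv): the outer bounds $0 \le \lambda_{\min}(L_H^+)$ and $\lambda_{\max}(L_H^+) \le 2\Delta^+$ come directly from (iii), while the middle bounds follow because (iv) produces $d_v^+$ as an explicit $H$-eigenvalue for every vertex $v$; choosing $v$ so that $d_v^+ = \delta^+$ pins $\lambda_{\min}(L_H^+) \le \delta^+$, and the analogous choice pins $\Delta^+ \le \lambda_{\max}(L_H^+)$.

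The delicate step is the vanishing in part (iv): one must verify that $a^+_{j\,i\,\cdots\,i} = 0$ for every $j$. In the directed $m$-uniform setting with $m \ge 3$ this is automatic, since every non-zero entry of $A_H^+$ arises from a tuple carrying the $m$ distinct vertices of an edge, whereas $(j,i,\dots,i)$ contains only two distinct indices. For the non-uniform reading of $m.c.e(H) \ge 3$ the bookkeeping is subtler, because stray two-vertex edges could in principle populate entries of the form $a^+_{j\,i\,\cdots\,i}$; the clean sufficient hypothesis is that every edge uses at least three distinct vertices, which is how I would frame the assumption in the write-up.
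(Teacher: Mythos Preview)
Your proof follows essentially the same route as the paper: the all-ones vector for (i), the Gershgorin-type disk bound of Theorem~\ref{disk} for (ii) and (iii), direct computation with the standard basis vector $\mathbf{1}_i$ for (iv), and the combination of (iii) and (iv) for (v). Your write-up is in fact more explicit than the paper's, which simply asserts $(L_H^+\mathbf{1}_k)_i=0$ for $i\neq k$ in part (iv) without saying why the adjacency contribution $a^+_{i,k,\dots,k}$ vanishes. Your closing remark about two-vertex edges in the non-uniform setting is a genuine observation: the hypothesis $m.c.e(H)\geq 3$ constrains only the \emph{maximum} edge size, so a stray edge of cardinality two would indeed produce a non-zero entry $a^+_{j,i,\dots,i}$ and break the argument; the paper does not address this, and your suggested strengthening to ``every edge has at least three vertices'' (equivalently $\operatorname{corank}(H)\geq 3$) is the correct fix.
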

\begin{proof}
\begin{enumerate}[(i)]
\item  Choose a vector $x=\bold{1}$.\\
Now, $(L_H^+x)_i=0, 1\leq i\leq n$. Hence our result follows.
\item Using the Theorem \ref{disk}, we have $$|\lambda -l_{ii\dots i}^+|\leq  \sum\limits_{{i_{2}, i_{3}, \dots,  i_{m}=1,\atop{\delta_{ii_2\dots i_m=0}}}}^{n} |l_{ii_{2}i_{3}\dots i_{m}}^+|\leq \Delta^+,$$
Thus,  $\rho(L_{H}^+)\leq 2\Delta^+$.
\item 
From the Theorem \ref{disk}, for a given eigenvalue $\lambda$ there exists an $i$, such that,
\begin{align*}
\mid \lambda -d^+(i)\mid &\leq d^+(i)\text{, } 1\leq i\leq n.
\end{align*}
Thus, $\lambda$, $0\leq\lambda\leq 2\Delta^+$.

\item Let us choose a $k$, such that, $1\leq k\leq n$. Now we have the following cases,
Case-1(When $i=k$):
\begin{align*}
(L_H^+x)_i&=\sum\limits_{i_2,i_3,\dots, i_m=1}^n l_{ki_2\dots i_m}^+x_{i_2}x_{i_3}\dots x_{i_m}\\
&=l_{kk\dots k}^+\\
&=d^+(k).x_i
\end{align*}
Case-2 (When $i\neq k$):
\begin{align*}
(L_H^+x)_i&=\sum\limits_{i_2,i_3,\dots, i_m=1}^n l_{ii_2\dots i_m}^+x_{i_2}x_{i_3}\dots x_{i_m}\\
&=0\\
&=d^+(k).x_i
\end{align*}

Hence the result follows.
\item Our result follows from the part (iii) and (iv) of this theorem.
\end{enumerate}
\end{proof}

\begin{theorem}
Let $H=(V, E)$ be a directed hypergraph and $\mathbb{L}_{H}^+=\mathit{l_{i_1\dots i_m}^+}$ be the corresponding signless Laplacian hypermatrix. Then,
\begin{enumerate}[(i)]
\item If $m.c.e(H)\geq 3$, $(d^{+}(i),\bold{1}_{i})$ is an eigenpair.
\item $2\delta^+\leq\rho(\mathbb{L}_H^+)\leq 2\Delta^+$.
\item If $m.c.e(H)\geq 3$, $0\leq \lambda_{min}(\mathbb{L}_{H}^+)\leq \delta^+\leq \Delta^+ \leq \lambda_{max}(\mathbb{L}_H^+) \leq 2\Delta^+$, where $\lambda_{min}$ and $\lambda_{max}$ are the minimum and maximum eigenvaluse of $\mathbb{L}_H^+$, respectively. 
\item Suppose $H=(V,E)$ is directed $m$-uniform hypergraph, such that, $m$ is even. Suppose $E=\{e_1,e_2,\dots,e_t\}$. If $\abs{ \cap_{i=1}^t  e_i}\geq 1$ then $0$ is an eigenvalue of $\mathbb{L}_{H}^+$.
\item If $H$ is a $k$-out regular directed hypergraph then $2k$ is an eigenvalue of $\mathbb{L}_H^+$.
\end{enumerate}
\end{theorem}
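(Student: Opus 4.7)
The plan is to dispatch parts (i), (ii), (iii) and (v) by small adaptations of results already in the excerpt, and to concentrate the real work on part (iv), which needs an explicit eigenvector construction using the hypothesis on $\bigcap_i e_i$ and the parity of $m$.

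For part (i) I will plug the standard basis vector $\mathbf{1}_i$ into the action of $\mathbb{L}_H^+$ and split by cases $j = i$ and $j \ne i$, exactly as in Theorem~\ref{lapp}(iv). Only the all-$i$ multi-index survives in the adjacency part, and that entry vanishes because every directed edge has at least two distinct vertices, so $(\mathbb{L}_H^+ \mathbf{1}_i^{m-1})_j$ equals $d^+(i)$ when $j=i$ and $0$ otherwise, giving the eigenpair $(d^+(i),\mathbf{1}_i)$. For part (v) the vector $\mathbf{1}$ gives $(\mathbb{L}_H^+\mathbf{1}^{m-1})_i = d^+(i) + d^+(i) = 2k$ under $k$-out-regularity, so $2k$ is an $H$-eigenvalue. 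Part (ii) comes from Lemma~\ref{bound} applied to the non-negative tensor $\mathbb{L}_H^+$: by Lemma~\ref{deg}(1) the $i$-th row sum equals $r_i(D_H^+) + r_i(A_H^+) = 2 d^+(i)$, and bounding row sums over $i$ yields $2\delta^+ \le \rho(\mathbb{L}_H^+) \le 2\Delta^+$. Part (iii) then follows by packaging: the eigenvalues $d^+(i)$ produced by (i) force $\lambda_{\min} \le \delta^+$ and $\lambda_{\max} \ge \Delta^+$; Theorem~\ref{disk} gives disks $|\lambda - d^+(i)| \le d^+(i)$ that immediately imply $\lambda \ge 0$ for every eigenvalue, and (ii) supplies $\lambda_{\max} \le 2\Delta^+$.

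Part (iv) is where the argument has content. Fix any $v_k \in \bigcap_{i=1}^t e_i$ and define $x \in \mathbb{R}^n$ by $x_k = -1$ and $x_j = 1$ for $j \ne k$; I will show $\mathbb{L}_H^+ x^{m-1} = 0$, so that $(0,x)$ is an $H$-eigenpair. The computation relies on the identity $(A_H^+ x^{m-1})_i = \sum_{e : v_i \in T_e} \prod_{v_j \in e \setminus \{v_i\}} x_j$, which is immediate from the symmetrising coefficients in the definition of $A_H^+$. For $i \ne k$, the hypothesis that $v_k$ lies in every edge forces $v_k \in e \setminus \{v_i\}$ in every tail-sum term, so each such product picks up exactly one factor of $x_k = -1$ and equals $-1$; combined with $x_i^{m-1} = 1$ this yields $d^+(i)(1 + (-1)) = 0$. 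For $i = k$ each tail-sum term is a product of $+1$'s and contributes $+1$, so the whole expression collapses to $d^+(k)\,x_k^{m-1} + d^+(k)$. The main obstacle is precisely this last step: it vanishes iff $x_k^{m-1} = -1$, which is exactly where the hypothesis that $m$ is even is used, since then $m-1$ is odd and $(-1)^{m-1} = -1$. Once parity is in place, the two parts cancel and $(0,x)$ is the required eigenpair; without evenness the sign flips at $i=k$ and the cancellation fails, so the hypothesis is sharp for this construction.
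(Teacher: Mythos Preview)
Your proposal is correct and follows essentially the same approach as the paper's proof: parts (i), (ii), (iii), (v) are handled by the same tools (the computation from Theorem~\ref{lapp}(iv), Lemma~\ref{bound}, Theorem~\ref{disk}, and the all-ones vector), and for part (iv) both you and the paper exhibit an explicit $\pm 1$ eigenvector supported on the common vertex versus the rest. The only cosmetic difference is the sign convention in (iv): the paper sets $x_{1}=1$ and $x_{j}=-1$ for $j\neq 1$, while you set $x_{k}=-1$ and $x_{j}=1$ for $j\neq k$; since $-x$ is an eigenvector whenever $x$ is, the two constructions are equivalent, and your explicit remark that the parity hypothesis is exactly what makes $(-1)^{m-1}=-1$ is a useful clarification the paper leaves implicit.
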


\begin{proof}
\begin{enumerate}[(i)]
\item The proof is similar to the proof of part (iv) of the Theorem \ref{lapp} .
\item Using the Lemma \ref{bound}, we get our desired result.
\item 
Let $\lambda$ be an eigenvalue,
From the Theorem \ref{disk}, for a given $\lambda$ there exists an $i$, where, $1\leq i\leq n$, such that,  
$\abs{ \lambda -d^+(i)}\leq d^+(i)$.\\ 
So, $0\leq \lambda \leq 2d^+(i)$. Now using (i) of this theorem we get our desired result.\\
\item As $\mid \cap_{i=1}^t  e_i\mid\geq 1$, without loss of generality, suppose $v_1\in \cap_{i=1}^t  e_i$.
 Let us choose $x=(x_i)\in M_{1,n}$, such that,\\
$x_i= 
 \begin{cases}
 1, \text{ }i=1,\\
 -1, \text{ }i\neq1.\\
 \end{cases}$
 \\
Case-1(When $i=1$):
\begin{align*}
(\mathbb{L}_H^+x)_1&=\sum\limits_{i_2,i_3,\dots, i_m=1}^n \mathit{l}_{1i_2\dots i_m}^+x_{i_2}x_{i_3}\dots x_{i_m}\\
&=\mathit{l}_{11\dots 1}^++(-1)^{m-1}\sum\limits_{i_2,i_3,\dots, i_m=1}^n \mathit{l}_{1i_2\dots i_m}^+\\
&=0.\\
\end{align*}
Case-2 (When $i\neq 1$):
\begin{align*}
(\mathbb{L}_H^+x)_i&=\sum\limits_{i_2,i_3,\dots, i_m=1}^n\mathit{l}_{ii_2\dots i_m}^+x_{i_2}x_{i_3}\dots x_{i_m}\\
&=-\mathit{l}_{ii\dots i}^++(-1)^{m-2}\sum\limits_{i_2,i_3,\dots, i_m=1}^n \mathit{l}_{ii_2\dots i_m}^+\\
&=0.
\end{align*}
Thus our result follows.
\item Choose a vector $x=\bold{1}$.\\
Then  $2d$ becomes an eigenvalue of $\mathbb{L}_H^+$ with the eigenvector $x$.\\
\end{enumerate}
\end{proof}
The above two theorems also hold for $L_H^-$ and $\mathbb{L}_H^-$.
\section{Connectivity in a directed hypergraph}
As in graph, we have the following definition for a directed hypergraph to be strongly connected.
\begin{definition} 
A directed walk from a vertex $v_0$ to $v_k$ on a directed hypergraph $H = (V, E)$ is an alternating sequence of vertices and (directed) edges, $\{v_0, e_1, v_1, e_2, v_2,\dots v_{k-1},e_{k}, v_k \}$, where the vertex $v_i\in T_{e_{i+1}},$ $\forall i\in \{0,1\dots,k-1\}$ and $v_i\in H_{e_{i}},$ $\forall i\in \{1,2,\dots,k\}$.
A directed hypergraph is strongly connected if for every ordered pair of vertices $(v_i,v_j)$, there exists a directed walk from $v_i$ to $v_j$.
\end{definition}

Take a directed hypergraph $H = (V,E)$, where $V = \{v_1,v_2, v_3, v_4\}$, $E = \{e_1, e_2\}$, $T_{e_1} = \{v_1, v_3\}, H_{e_1} = \{v_2\}$, $T_{e_2} = \{v_4, v_2\}, H_{e_2} = \{v_3\}$. The directed hypergraph is not strongly connected as there is no directed walk from the vertex $v_2$ to $v_1$, but, the corresponding adjacency hypermatrix is weakly irreducible. An underlying weakly irreducible  hypermatrix does not reflect  the strongly connectedness in the hypergraph.
Thus, to capture the strongly
connectivity information of a directed hypergraph, we are introducing the concept of weak* irreducible hypermatrix.
\begin{definition}  
Let $A = a_{i_1i_2\dots i_m}\in M_{m,n}$ be a real hypermatrix. Construct a directed graph $G^{*} = (V,E)$, with the vertex set $V=[n]$, and directed edges $\vec{ij}\in E$, if $$\sum\limits_{i_2,\dots,i_{m-1}=1}^n |a_{ii_2\dots i_{m-1}j}|>0.$$ The hypermatrix $A$ is called weak* irreducible if $G^{*}$ is strongly connected.
\end{definition}
Clearly a weak* irreducible hypermatrix is always weakly irreducible hypermatrix. It is easy to conclude that a directed hypergraph is strongly connected if and only if it's adjacency hypermatrix is weak* irreducible hypermatrix. 
\section{Discussion}
Many results on spectral properties  of directed and undirected hypergraphs which have been developed in earlier research work also follow for the same of  generalized uniform directed hypergraph. We did not mention all of them in this article as they are straight forward application of our definition. It is evident that the Perron-Frobenius theorem also hold for the weak* irreducible hypermatrix. 

\section*{Acknowledgment}
AB sincerely acknowledge the financial support from the grant "MATRICS" of the Science and Engineering Research Board (SERB), India (reference no.MTR/2017/000988).

\end{document}